\documentclass[a4paper,10pt]{article}

\usepackage{times}
\usepackage[T1]{fontenc}
\usepackage[latin1]{inputenc}
\usepackage{amsmath,amssymb}
\usepackage{amsthm}
\usepackage{amscd}

\newtheorem{theorem}{Theorem}[section]
\newtheorem{lemma}[theorem]{Lemma}

\newtheorem{proposition}[theorem]{Proposition}

\newtheorem{definition}[theorem]{Definition}

\theoremstyle{definition}
\newtheorem{example}[theorem]{Example}
\newtheorem{remark}[theorem]{Remark}

\numberwithin{table}{section}
\numberwithin{equation}{section}

\begin{document}
\title{ Sylvester equations and polynomial separation of spectra} 
\author{ Olavi Nevanlinna }
%\date{}                                           % Activate to display a given date or no date
\maketitle

 \begin{center}
{\footnotesize\em 
Aalto University\\
Department of Mathematics and Systems Analysis\\
 email: Olavi.Nevanlinna\symbol{'100}aalto.fi\\[3pt]
}
\end{center}

\begin{abstract}
Sylvester  equations $AX-XB=C$ have unique solutions for all $C$ when the spectra of $A$ and $B$ are disjoint. Here $A$ and $B$ are bounded operators in Banach spaces.  We discuss the existence of  polynomials $p$ such that the spectra of $p(A)$ and $p(B)$ are well separated,  either inside and outside of a circle or separated into different half planes.    Much of the discussion is based on  the following inclusion sets for the spectrum:   $V_p(T)=\{\lambda \in \mathbb C \ : \ |p(\lambda)| \le \|p(T)\| \}$ where $T$ is a bounded operator.  We also give an explicit series expansion for the solution in  terms of  $p(M)$, where $M=\begin{pmatrix} A&C\\ &B\end{pmatrix}$,  in the case where the spectra of $A$ and $B$ lie in different components of $V_p(M)$ .

%We point out that in  solving $AX-XB=C$, in particular when one has to solve this for many different right hand sides,  it  may be advantageous to first solve  a related equation $p(A)Y-Yp(B)=C$ where $p$ is a suitable polynomial and then recover the solution $X$ as $q(A,B)(Y)$ where $q(A,B)(Y)$ is  obtained   using bivariate polynomial calculus applied to the divided difference of $p$.   A related approach using multicentric  holomorphic calculus to derive a closed form series expansion for the solution is also included.  The series expansion is  given  as powers of  $p(M)$ where $M=\begin{pmatrix} A&C\\ &B\end{pmatrix}$.
\end{abstract}

\bigskip
{\it Key words:} Sylvester equation, multicentric calculus, preconditioning, spectral separation

{\it 2010 Mathematics Subject Classification:}  15A24, 47A10, 47A60, 47A62, 65F08, 65F10, 65J10

\bigskip
 
 \section{Introduction}
 
 We discuss the solution of the Sylvester equation
 \begin{equation}\label{original}
 AX-XB=C
 \end{equation}  
 by  solving first a related equation
 \begin{equation}\label{modified}
 p(A)Y-Yp(B)=C
 \end{equation}
 which is assumed to be easier to solve and then recover the solution of (\ref{original}) as
 \begin{equation}\label{solution}
 X= q(A,B)(Y).
 \end{equation}
 Here the operator  $q(A,B)$ is obtained by the bivariate polynomial functional calculus from the divided difference of $p$,  see Section 2, below. Alternatively, one can first form a new right hand side and  consider solving
$$
p(A)X-Xp(B)= q(A,B)(C),
$$
see Propositions 2.2 and 2.4.

We consider the equations in the generality of bounded operators in Banach spaces. Given  Banach spaces $\mathcal X, \mathcal Y$  we assume that $A$ is bounded in $\mathcal X$, $B$ in  $\mathcal Y$ and  while $C$ and the unknowns $X$ and $Y$ are bounded operators from  $\mathcal Y$ to $\mathcal X$.   We  discuss solution methods which can be formulated in infinite dimensional cases but which  should be useful in matrix  problems, in particular when the dimensions are large so that direct methods may not be practical.   In this introduction we mention two basic representations for the solution, and then provide the spectral conditions under which a  polynomial $p$ exist so that  these  methods can be used.

 In a series of papers [10,11,12] we have studied the possibility of  taking a polynomial as a new {\it global} variable.  As  polynomials are not injective  we represent scalar functions $\varphi : z \mapsto \varphi(z) \in \mathbb C$ by vector valued functions $f: w \mapsto f(w) \in \mathbb C^d$ where $w=p(z)$ and $p$ is a polynomial of degree $d$ with simple roots $\lambda_j$.   Then $\varphi$ is represented in the {\it multicentric} form
\begin{equation}\label{multi}
\varphi(z) = \sum_{j=1}^d \delta_j(z) f_j(p(z))
\end{equation}
where $\delta_j$ is the Lagrange polynomial   $\delta_j(z) = \prod_{k\not=j}\frac{z-\lambda_k}{\lambda_j-\lambda_k}$.  In this representation  $\delta_j(A)$ is always well defined for any bounded operator and if $p(A)$ is "simpler" than $A$, small in norm,  diagonalizable, normal, etc,  an efficient functional calculus may be available for  defining and computing $f_j(p(A))$.  

Here  the idea is again to replace the operators $A$ and $B$   by $p(A)$ and $p(B)$ but  part of our dicussion is independent of the multicentric  calculus.
However, we discuss an application of the multicentric calculus which can be viewed as a modification of the {\it sign-}function approach, leading to a series expansion   given  in  powers of  $p(M)$ where $M=\begin{pmatrix}A&C\\ &B\end{pmatrix}$.

We shall now summarize the key results on the Sylvester equation, needed in the following.      If $T$ is a bounded operator in a Banach space, then we denote by $\sigma(T)$   the spectrum:
$$
\sigma(T)=\{ \lambda \in \mathbb C \ :  \lambda-T  \text { is not invertible} \}.$$

Bhatia and Rosenthal have written a readable  survey of (\ref{original}), [1]. They call the  following as Sylvester-Rosenblum  Theorem.

\begin{theorem}  Let $\mathcal X$ and $\mathcal Y$ be Banach spaces and $A$, $B$ bounded operators in $\mathcal X$ and $\mathcal Y$, respectively.
If 
\begin{equation}\label{exist}
\sigma(A) \cap \sigma(B) = \emptyset,
\end{equation}
 then the equation (\ref{original}) has a unique solution $X \in \mathcal B(\mathcal Y, \mathcal X)$ for every  $C\in\mathcal B(\mathcal Y, \mathcal X)$.  \end{theorem}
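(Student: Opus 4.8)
The plan is to prove the Sylvester-Rosenblum theorem via the classical contour-integral construction, which works verbatim in the Banach space setting. First I would recall the resolvent identity and set up the relevant objects. Since $\sigma(A)$ and $\sigma(B)$ are disjoint compact subsets of $\mathbb{C}$, I can choose a Cauchy contour $\Gamma$ (a finite union of rectifiable Jordan curves) that surrounds $\sigma(A)$ and has $\sigma(B)$ in its exterior. I would then propose the candidate solution
\begin{equation*}
X = \frac{1}{2\pi i}\int_\Gamma (z-A)^{-1}\, C\, (z-B)^{-1}\, dz,
\end{equation*}
which is a well-defined element of $\mathcal{B}(\mathcal{Y},\mathcal{X})$ because the integrand is continuous in operator norm on the compact set $\Gamma$ (both resolvents are bounded there, the first since $\Gamma$ avoids $\sigma(A)$, the second since $\Gamma$ avoids $\sigma(B)$).

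Next I would verify that this $X$ solves \eqref{original}. The key step is to compute $AX - XB$ by inserting $A = z - (z-A)$ and $B = z - (z-B)$ inside the integral:
\begin{equation*}
AX - XB = \frac{1}{2\pi i}\int_\Gamma \bigl[ A(z-A)^{-1} C (z-B)^{-1} - (z-A)^{-1} C (z-B)^{-1} B \bigr]\, dz.
\end{equation*}
Writing $A(z-A)^{-1} = z(z-A)^{-1} - I$ and $(z-B)^{-1}B = z(z-B)^{-1} - I$, the two $z(z-A)^{-1}C(z-B)^{-1}$ terms cancel, leaving
\begin{equation*}
AX - XB = \frac{1}{2\pi i}\int_\Gamma \bigl[ -C(z-B)^{-1} + (z-A)^{-1} C \bigr]\, dz.
\end{equation*}
Now $\frac{1}{2\pi i}\int_\Gamma (z-A)^{-1}\, dz = I_{\mathcal X}$ because $\Gamma$ encircles all of $\sigma(A)$ (this is the Riesz functional calculus applied to the constant function $1$), while $\frac{1}{2\pi i}\int_\Gamma (z-B)^{-1}\, dz = 0$ because $\Gamma$ encircles none of $\sigma(B)$. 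Hence $AX - XB = C$, as required.

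Finally, for uniqueness it suffices to show that the homogeneous equation $AX - XB = 0$ forces $X = 0$. The standard argument: $AX = XB$ implies by induction $A^n X = X B^n$ for all $n$, hence $\varphi(A) X = X \varphi(B)$ for every polynomial $\varphi$, and then, passing to the holomorphic functional calculus over a Cauchy domain separating the spectra, $f(A) X = X f(B)$ for every $f$ holomorphic on a neighborhood of $\sigma(A)\cup\sigma(B)$. Choosing $f$ to be $1$ near $\sigma(A)$ and $0$ near $\sigma(B)$ (possible since the two compacta are disjoint) yields $X = f(A) X = X f(B) = 0$. Alternatively one may invoke the fact that the map $S \mapsto AS - SB$ on $\mathcal{B}(\mathcal{Y},\mathcal{X})$ has spectrum contained in $\sigma(A) - \sigma(B)$, which omits $0$ under \eqref{exist}, so the operator is invertible and existence and uniqueness follow simultaneously; I would mention this but carry out the contour construction explicitly since it gives the solution formula used later. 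The main obstacle is purely bookkeeping: justifying the interchange of the bounded operators $A$, $B$ with the norm-convergent operator-valued integral, which follows from continuity of composition and the fact that $A$, $B$ are bounded, so there is no real analytic difficulty beyond the classical finite-dimensional case.
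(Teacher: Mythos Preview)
The paper does not actually give its own proof of this theorem: it is stated as the Sylvester--Rosenblum Theorem, attributed to the literature (with a pointer to the Bhatia--Rosenthal survey [1] and to Rosenblum [14]), and then used as background. So there is no in-paper argument to compare against.

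That said, your proof is correct and self-contained, and it meshes well with what the paper does next. The contour-integral formula you write down for $X$ is precisely the representation stated as Theorem~1.2; the paper's one-line proof of Theorem~1.2 goes in the opposite direction (assume a solution $X$ exists, multiply the shifted equation by resolvents, integrate), whereas you define $X$ by the integral and verify $AX-XB=C$ directly. Your route has the advantage of giving existence and the explicit formula simultaneously, so that Theorem~1.1 and Theorem~1.2 come out in one stroke. Your uniqueness argument via $f(A)X=Xf(B)$ with a separating idempotent $f$ is standard and fine; the alternative you mention (that $X\mapsto AX-XB$ has spectrum in $\sigma(A)-\sigma(B)$) is in fact the route taken in the references the paper cites.
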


We shall only consider the cases where (\ref{exist}) holds.  Thus at least one of the operators $A$ and $B$ can be assumed to be nonsingular, and we shall assume  that $B$ is.  This is no restriction of generality as we could "transpose" the equation.  Further, if  $\lambda$ is a regular point for both $A$ and $B$ we could consider the equivalent equation
\begin{equation}\label{shifted}
(A-\lambda)X-X(B-\lambda)=C
\end{equation}
instead and then both operators are invertible.  This leads to the following representation of the solution.  

\begin{theorem}\label{ros}({\rm [14]})
If $\gamma$ is a union of closed contours  with total winding numbers 1 around $\sigma(A)$ and 0 around $\sigma(B)$, then the solution of  (\ref{original}) can be expressed as
\begin{equation}\label{integral}
X= \frac{1}{2\pi i}\int_\gamma (\lambda-A)^{-1}C (\lambda -B)^{-1} d\lambda.
\end{equation}  
\end{theorem}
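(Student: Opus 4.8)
The plan is to use the uniqueness half of Theorem~1.1: since $\sigma(A)\cap\sigma(B)=\emptyset$, the equation~(\ref{original}) has at most one solution, so it suffices to verify that the operator $X$ defined by the right‑hand side of~(\ref{integral}) actually solves it. First I would check that the integral is meaningful. For the winding numbers of $\gamma$ around $\sigma(A)$ and around $\sigma(B)$ to be defined, $\gamma$ must be disjoint from both spectra; hence $\lambda\mapsto(\lambda-A)^{-1}$ and $\lambda\mapsto(\lambda-B)^{-1}$ are norm‑analytic on a neighbourhood of the compact set $\gamma$, so $\lambda\mapsto(\lambda-A)^{-1}C(\lambda-B)^{-1}$ is a continuous $\mathcal B(\mathcal Y,\mathcal X)$‑valued function on $\gamma$ and the (Bochner) integral exists; boundedness of $X$ is then immediate.

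Next comes the core computation. Because $A$ and $B$ are bounded, they may be pulled inside the integral, so
$$AX-XB=\frac{1}{2\pi i}\int_\gamma\bigl[A(\lambda-A)^{-1}C(\lambda-B)^{-1}-(\lambda-A)^{-1}C(\lambda-B)^{-1}B\bigr]\,d\lambda.$$
Using the elementary identities $A(\lambda-A)^{-1}=\lambda(\lambda-A)^{-1}-I$ and $(\lambda-B)^{-1}B=\lambda(\lambda-B)^{-1}-I$, the two $\lambda(\lambda-A)^{-1}C(\lambda-B)^{-1}$ contributions cancel and the integrand collapses to $(\lambda-A)^{-1}C-C(\lambda-B)^{-1}$. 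Hence
$$AX-XB=\Bigl(\frac{1}{2\pi i}\int_\gamma(\lambda-A)^{-1}\,d\lambda\Bigr)C-C\Bigl(\frac{1}{2\pi i}\int_\gamma(\lambda-B)^{-1}\,d\lambda\Bigr).$$
It then remains to identify the two operator‑valued scalar integrals through the Riesz--Dunford functional calculus: $\frac{1}{2\pi i}\int_\gamma(\lambda-A)^{-1}\,d\lambda$ is the Riesz projection for the part of $\sigma(A)$ enclosed by $\gamma$, which by the winding‑number‑$1$ hypothesis is all of $\sigma(A)$, so it equals $I_{\mathcal X}$; likewise $\frac{1}{2\pi i}\int_\gamma(\lambda-B)^{-1}\,d\lambda$ is the Riesz projection for the (empty) part of $\sigma(B)$ around which $\gamma$ winds once, hence $0$. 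Substituting gives $AX-XB=C$, and uniqueness concludes the argument.

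The computation itself is routine; the only points needing care are (i) treating $\gamma$ as a genuine cycle in $\rho(A)\cap\rho(B)$ to which the holomorphic functional calculus applies — which is exactly what the winding‑number hypothesis provides — and (ii) the evaluation of $\frac{1}{2\pi i}\int_\gamma(\lambda-T)^{-1}\,d\lambda$ as $I$ or as $0$ according to whether $\gamma$ winds once or not at all around $\sigma(T)$. I would either cite this as the standard property of spectral projections, or, for a self‑contained version, deform $\gamma$ (using analyticity of the integrand between the curves, Cauchy's theorem for vector‑valued integrals) to a union of small circles around the spectral components in the $A$‑case and to a contractible contour in the unbounded component of $\rho(B)$ in the $B$‑case. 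I do not anticipate a genuine obstacle beyond this bookkeeping.
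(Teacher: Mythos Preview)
Your argument is correct and uses the same ingredients as the paper --- the resolvent algebra and the identification of $\frac{1}{2\pi i}\int_\gamma(\lambda-A)^{-1}\,d\lambda=I$ and $\frac{1}{2\pi i}\int_\gamma(\lambda-B)^{-1}\,d\lambda=0$ via the functional calculus. The only difference is one of direction: the paper starts from the (already known to exist) solution $X$, rewrites~(\ref{shifted}) as $(\lambda-A)^{-1}X-X(\lambda-B)^{-1}=(\lambda-A)^{-1}C(\lambda-B)^{-1}$, and integrates over $\gamma$ to obtain the formula; you instead define $X$ by the integral, verify $AX-XB=C$, and invoke uniqueness. Both routes pass through the same identity and the same two spectral‑projection evaluations, so this is a cosmetic rather than a substantive variation.
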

\begin{proof} Operate  (\ref{shifted}) by $(\lambda-A)^{-1}$ from left and with $(\lambda-B)^{-1}$ from right. Integrating over $\gamma$ yields the claim.\end{proof}
Denote by $\rho(T)$ the spectral radius of $T$:    $\rho(T)= \sup\{ | \lambda| \ : \lambda \in \sigma(T)\}$.

\begin{proposition}
Assume  that $B$ is invertible and that 
$\rho(A) \rho(B^{-1}) < 1$.   Then   the series $\sum_{n=0}^\infty A^n C B^{-n-1}$ converges and setting
\begin{equation}
X= - \sum_{n=0}^\infty A^n C B^{-n-1}
\end{equation}
we have  a representation for the solution.
\end{proposition}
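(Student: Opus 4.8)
The plan is to prove convergence by the root test together with Gelfand's spectral radius formula, then verify directly that the resulting operator solves $AX-XB=C$, and finally invoke Theorem 1.1 for uniqueness, after observing that the hypothesis $\rho(A)\rho(B^{-1})<1$ already forces $\sigma(A)\cap\sigma(B)=\emptyset$.

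First I would establish convergence. Since $\mathcal B(\mathcal Y,\mathcal X)$ is complete, it suffices to show the series converges absolutely in operator norm. Submultiplicativity gives $\|A^nCB^{-n-1}\|\le \|C\|\,\|B^{-1}\|\,\|A^n\|\,\|B^{-n}\|$, hence
\[
\limsup_{n\to\infty}\|A^nCB^{-n-1}\|^{1/n}\le \Big(\lim_{n\to\infty}\|A^n\|^{1/n}\Big)\Big(\lim_{n\to\infty}\|B^{-n}\|^{1/n}\Big)=\rho(A)\,\rho(B^{-1})<1
\]
by Gelfand's formula. The root test then yields absolute convergence, so $X:=-\sum_{n=0}^\infty A^nCB^{-n-1}$ is a well-defined bounded operator from $\mathcal Y$ to $\mathcal X$.

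Next I would check that $X$ satisfies the equation. Because the series converges in norm and the maps $S\mapsto AS$ and $S\mapsto SB$ are bounded and linear on $\mathcal B(\mathcal Y,\mathcal X)$, they commute with the summation, so
\[
AX-XB=-\sum_{n=0}^\infty A^{n+1}CB^{-n-1}+\sum_{n=0}^\infty A^nCB^{-n}.
\]
In the second sum I split off the $n=0$ term, which is $C$, and reindex the remainder by $m=n-1$; this is legitimate by absolute convergence. The remainder then cancels the first sum term by term, leaving $AX-XB=C$. For uniqueness I would observe that if $\mu\in\sigma(B)$ then $\mu^{-1}\in\sigma(B^{-1})$, so $|\mu|^{-1}\le\rho(B^{-1})$, i.e.\ $|\mu|\ge\rho(B^{-1})^{-1}>\rho(A)$; hence $\sigma(A)\cap\sigma(B)=\emptyset$ and Theorem 1.1 applies, making $X$ the unique solution.

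I do not expect a genuine obstacle here: the only points needing care are the interchange of summation with left/right multiplication (valid by absolute convergence) and the reindexing/telescoping. A clean alternative, if preferred, is to derive the same formula from the contour representation (\ref{integral}): take $\gamma$ the circle $|\lambda|=R$ with $\rho(A)<R<\rho(B^{-1})^{-1}$, which by the modulus bound just noted has winding number $1$ around $\sigma(A)$ and $0$ around $\sigma(B)$; expand $(\lambda-A)^{-1}=\sum_{m\ge0}\lambda^{-m-1}A^m$ and $(\lambda-B)^{-1}=-\sum_{n\ge0}\lambda^nB^{-n-1}$ (both convergent on $\gamma$), and integrate term by term using $\frac{1}{2\pi i}\int_{|\lambda|=R}\lambda^{n-m-1}\,d\lambda=\delta_{mn}$.
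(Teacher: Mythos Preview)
Your proposal is correct and follows essentially the same approach as the paper: convergence via the root test and Gelfand's formula, verification by multiplying on the left by $A$ and on the right by $B$ and telescoping, and the same alternative derivation from the contour integral (\ref{integral}) with the circle $|\lambda|=r$ and the two geometric expansions of the resolvents. The only addition is your explicit uniqueness argument via $\sigma(A)\cap\sigma(B)=\emptyset$, which the paper leaves implicit.
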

\begin{proof}
The series converges as  $$  \|A^n\| ^{1/n} \| CB^{-1} \|^{1/n} \|B^{-n}\|^{1/n}\rightarrow \rho(A)\rho(B^{-1})<1.$$  Multiplying the series by $A$ from left and subtracting the result of multiplying the series by $B$ from right then yields the claim.

Notice that this also follows  from Theorem \ref{ros} since by assumption  there exists an $r>0$ such that $\rho(A)<r$ and $\rho(B^{-1} ) < 1/r$. Then  we can integrate along $|\lambda|=r$  substituting
$$
(\lambda-A)^{-1} = \sum_{n=0}^\infty \lambda^{-n-1}A^n  \  \text  {  and } \ 
(\lambda-B)^{-1} = - \sum_{n=0}^\infty \lambda^n B^{-n-1}.
$$

\end{proof}

Our first aim is to discuss whether   for given $A$ and $B$ there is a polynomial $p$ such that
\begin{equation}\label{polcondition}
\rho(p(A)) \ \rho(p(B^{-1})) <1
\end{equation}
so that (\ref{modified})  could be solved as
 \begin{equation}
Y= - \sum_{n=0}^\infty p(A)^n C p(B)^{-n-1}.
\end{equation}
 
Recall, that  {\it the polynomially convex hull} $\widehat K$ of a compact set $K \subset
\mathbb C$ is defined as
\begin{equation}\label{polcondef}
\widehat K = \{ z \in \mathbb C \ : \ |p(z)| \le \|p\|_K \ \text{ for all  polynomials } p\}
\end{equation}
where $\|p\|_K= \sup_{z\in K}|p(z)|$. Thus $\widehat K$ is obtained by "filling the holes" of $K$. 
We have the following.

\begin{theorem}
There exists a polynomial $p$ such that $p(B)$ is invertible and (\ref{polcondition}) holds
 if and only if 
\begin{equation}\label{firstcond}
\widehat{\sigma(A)} \cap \sigma(B)=\emptyset.
\end{equation}
\end{theorem}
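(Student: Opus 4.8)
The plan is to prove the two implications separately, using the polynomially convex hull characterization together with the spectral mapping theorem $\sigma(p(T))=p(\sigma(T))$ and the basic fact that the spectral radius satisfies $\rho(p(T))=\max\{|p(\lambda)|:\lambda\in\sigma(T)\}=\|p\|_{\sigma(T)}$.

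For the ``if'' direction, assume $\widehat{\sigma(A)}\cap\sigma(B)=\emptyset$. Since $\widehat{\sigma(A)}$ is compact and polynomially convex and $\sigma(B)$ is compact and disjoint from it, I would like to invoke a Runge-type / Oka--Weil separation: there is a polynomial $q$ with $\|q\|_{\sigma(A)}$ small (say $<\tfrac12$) while $|q|$ is large on $\sigma(B)$. The cleanest route is first to note that a point of $\sigma(B)\setminus\widehat{\sigma(A)}$ can be separated from $\widehat{\sigma(A)}$ by a polynomial taking value, say, $1$ there and having sup-norm $<\varepsilon$ on $\widehat{\sigma(A)}=$ the hull; by compactness of $\sigma(B)$ and a partition/covering argument one upgrades this to a single polynomial $q$ with $\|q\|_{\sigma(A)}<\varepsilon$ and $\min_{\sigma(B)}|q|>1$. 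Then $0\notin q(\sigma(B))=\sigma(q(B))$, so $q(B)$ is invertible with $\rho(q(B)^{-1})=1/\min_{\sigma(B)}|q|<1$, while $\rho(q(A))=\|q\|_{\sigma(A)}<\varepsilon$. Choosing $\varepsilon<1$ and then, if necessary, replacing $q$ by $\beta q$ with a suitable scalar $\beta$ (which does not affect invertibility of $q(B)$ and scales both spectral radii reciprocally only up to the product), one arranges $\rho(p(A))\,\rho(p(B^{-1}))<1$ with $p=q$ or $p=\beta q$. Actually since $\rho(q(A))\rho(q(B)^{-1})<\varepsilon\cdot 1<1$ already, $p=q$ works directly, once we also guarantee $p(B)$ invertible, which we did.

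For the ``only if'' direction, suppose such a $p$ exists: $p(B)$ invertible and $\rho(p(A))\rho(p(B)^{-1})<1$. Pick $r$ with $\rho(p(A))<r<1/\rho(p(B)^{-1})=\min_{\sigma(B)}|p|$, as in the Proposition. Then for every $\mu\in\sigma(A)$, $|p(\mu)|\le\rho(p(A))<r$; I claim this forces $|p(z)|<r$ for all $z\in\widehat{\sigma(A)}$ — indeed, by definition of the hull, $|p(z)|\le\|p\|_{\sigma(A)}=\rho(p(A))<r$ for $z\in\widehat{\sigma(A)}$. On the other hand for $\nu\in\sigma(B)$ we have $|p(\nu)|\ge\min_{\sigma(B)}|p|>r$. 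Hence no point of $\sigma(B)$ can lie in $\widehat{\sigma(A)}$, i.e. $\widehat{\sigma(A)}\cap\sigma(B)=\emptyset$.

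The main obstacle is the separation step in the ``if'' direction: producing a \emph{single} polynomial that is simultaneously small on the polynomially convex hull of $\sigma(A)$ and uniformly bounded away from zero on the compact set $\sigma(B)$. This is where polynomial convexity is essential — without it one only separates individual points — and the technical device is the Oka--Weil theorem (or the runge-type approximation: a function holomorphic on a neighborhood of a polynomially convex compact set is uniformly approximable by polynomials there), applied to a function that is $0$ near $\widehat{\sigma(A)}$ and $1$ near $\sigma(B)$; such a holomorphic function exists because the two compacta are disjoint, and its polynomial approximant gives the desired $p$ after a harmless rescaling. Everything else — spectral mapping, the spectral-radius formula, invertibility of $p(B)$ from $0\notin\sigma(p(B))$ — is routine.
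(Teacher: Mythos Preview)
Your ``only if'' direction is correct and essentially identical to the paper's argument (the paper phrases it as a contrapositive, you phrase it directly, but the content is the same).

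The ``if'' direction, however, has a genuine gap. Your proposed Runge/Oka--Weil route---approximating the function that is $0$ near $\widehat{\sigma(A)}$ and $1$ near $\sigma(B)$---requires that the complement of $\widehat{\sigma(A)}\cup\sigma(B)$ be connected (equivalently, that this union be polynomially convex). The hypothesis does \emph{not} guarantee this: take $\sigma(A)=\{0\}$ and $\sigma(B)=\{|z|=2\}$. Then $\widehat{\sigma(A)}\cap\sigma(B)=\emptyset$, but any polynomial $p$ uniformly close to $1$ on the circle $|z|=2$ must, by the maximum principle, have $|p(0)-1|$ small as well, so it cannot be close to $0$ at the origin. (The theorem is of course still true here---$p(z)=z$ works---but your mechanism for producing $p$ fails.) Note that the stronger condition $\widehat{\sigma(A)}\cap\widehat{\sigma(B)}=\emptyset$, under which such an approximation \emph{would} work, is precisely the hypothesis of Theorem~1.6, not of this theorem.

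Your first suggestion, a ``compactness/covering argument'' to upgrade pointwise polynomial separation to uniform separation on $\sigma(B)$, is also not completed, and this step is not routine: neither sums nor products of the local separating polynomials give what you need (sums can cancel; for products, the factors you don't control can be small). What is actually required here is exactly the Hilbert Lemniscate Theorem (e.g.\ Theorem~5.5.8 in Ransford): given $\widehat{\sigma(A)}\subset U$ open with $\sigma(B)\cap U=\emptyset$, it produces a single polynomial $p$ with $|p(z)|>\|p\|_{\sigma(A)}$ for all $z\notin U$, hence in particular $\min_{\sigma(B)}|p|>\|p\|_{\sigma(A)}=\rho(p(A))$. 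This is the tool the paper uses, and it is the missing ingredient in your argument.
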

The proof  is in Section 3 where we also  show  how small  the product in (\ref{polcondition}), when properly normalized,  can be.  

The second aim concerns another sufficient  condition, based on the separation of the spectra of $A$ and $B$ by a vertical line.  
Again, by subtracting  a suitable constant from the operators we may assume that the line is the imaginary axis.  We shall denote by $\mathbb C_{+}$ the open right half plane and by $\mathbb C_{-}$ the  open left half plane.   

\begin{theorem} ({\rm [5]})  Suppose that the operators $A$, $B$ and $C$ are all bounded and that $\sigma(A) \subset \mathbb C_{+}$ and $\sigma(B) \subset \mathbb C_{-}$.  Then the solution of (\ref{original}) can be represented as
\begin{equation}\label{heinzintegral}
X = \int_0^\infty e^{-tA} C e^{tB} dt.
\end{equation}
\end{theorem}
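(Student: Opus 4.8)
The plan is to prove (\ref{heinzintegral}) by direct verification: first show the integral on the right defines a bounded operator, then check that this operator solves (\ref{original}), and finally appeal to the Sylvester--Rosenblum Theorem for uniqueness (it applies because $\sigma(A)\subset\mathbb C_{+}$ and $\sigma(B)\subset\mathbb C_{-}$ are disjoint). Since $A$ and $B$ are bounded, $e^{-tA}=\sum_{n\ge0}(-tA)^n/n!$ and $e^{tB}=\sum_{n\ge0}(tB)^n/n!$ converge in operator norm and depend analytically on $t$, so $t\mapsto e^{-tA}Ce^{tB}$ is a norm-continuous $\mathcal B(\mathcal Y,\mathcal X)$-valued function on $[0,\infty)$, and the only issue for convergence of the Bochner integral is its behaviour as $t\to\infty$.

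The heart of the matter is the pair of exponential decay estimates
$$
\|e^{-tA}\|\le M_{A}\,e^{-\alpha t}\qquad\text{and}\qquad\|e^{tB}\|\le M_{B}\,e^{-\beta t}\qquad(t\ge0),
$$
valid for any $0<\alpha<\min\{\operatorname{Re}\lambda:\lambda\in\sigma(A)\}$ and $0<\beta<-\max\{\operatorname{Re}\lambda:\lambda\in\sigma(B)\}$; both bounds are strictly positive since $\sigma(A)$ and $\sigma(B)$ are compact and lie in the stated open half planes. I would obtain these from the spectral mapping theorem for the analytic functional calculus: $\sigma(e^{-tA})=\{e^{-t\lambda}:\lambda\in\sigma(A)\}$ gives $\rho(e^{-tA})=e^{-t\min\operatorname{Re}\sigma(A)}$, and likewise $\rho(e^{tB})=e^{t\max\operatorname{Re}\sigma(B)}$. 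By Gelfand's formula $\rho(S)=\lim_n\|S^n\|^{1/n}$ applied to $S=e^{-A}$ and $S=e^{B}$, together with submultiplicativity along $e^{-tA}=e^{-nA}e^{-rA}$, these spectral-radius identities upgrade to the stated norm bounds; equivalently, for a bounded generator the exponential growth bound of the semigroup coincides with its spectral bound. I expect this to be the only step with genuine content; once it is granted, $\|e^{-tA}Ce^{tB}\|\le M_{A}M_{B}\|C\|\,e^{-(\alpha+\beta)t}$ is integrable on $[0,\infty)$, so $X:=\int_0^\infty e^{-tA}Ce^{tB}\,dt$ is a well-defined element of $\mathcal B(\mathcal Y,\mathcal X)$.

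To verify $AX-XB=C$, note that $A$ and $B$, being bounded, commute with the Bochner integral, and that $\frac{d}{dt}\bigl(e^{-tA}Ce^{tB}\bigr)=-Ae^{-tA}Ce^{tB}+e^{-tA}Ce^{tB}B$ (legitimate since the exponentials are analytic in $t$ and $A$, $B$ are bounded). Hence
$$
AX-XB=\int_0^\infty\bigl(Ae^{-tA}Ce^{tB}-e^{-tA}Ce^{tB}B\bigr)\,dt=-\int_0^\infty\frac{d}{dt}\bigl(e^{-tA}Ce^{tB}\bigr)\,dt,
$$
and by the fundamental theorem of calculus this equals $-\bigl[e^{-tA}Ce^{tB}\bigr]_{t=0}^{t=\infty}=C-\lim_{t\to\infty}e^{-tA}Ce^{tB}=C$, the limit vanishing by the decay estimates. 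Thus $X$ solves (\ref{original}), and by the Sylvester--Rosenblum Theorem it is the unique solution, which is (\ref{heinzintegral}).

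For completeness I would remark that (\ref{heinzintegral}) can also be read off from Theorem \ref{ros}: take $\gamma$ to be the imaginary axis, realised as the limit of the boundaries of large half-discs in $\mathbb C_{+}$ (the circular arcs contribute $O(R^{-1})$ because $(\lambda-A)^{-1}C(\lambda-B)^{-1}$ decays like $|\lambda|^{-2}$), insert the Laplace representations $(\lambda-A)^{-1}=-\int_0^\infty e^{t\lambda}e^{-tA}\,dt$ and $(\lambda-B)^{-1}=\int_0^\infty e^{-t\lambda}e^{tB}\,dt$, valid on $\operatorname{Re}\lambda=0$ by the same estimates, and carry out the $\lambda$-integration by Fourier inversion; this collapses the resulting double integral onto its diagonal and returns $\int_0^\infty e^{-tA}Ce^{tB}\,dt$. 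The direct argument above is shorter, so that is the proof I would write, with this as a concluding comment.
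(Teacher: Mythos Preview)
Your proof is correct and follows essentially the same route as the paper: the paper also states the exponential decay bounds $\|e^{-tA}\|\le Ke^{-\varepsilon t}$, $\|e^{tB}\|\le Ke^{-\varepsilon t}$ to get convergence of the integral, and then verifies the equation by operating with $A$ from the left and integrating by parts, which is your fundamental-theorem-of-calculus computation in slightly different packaging. Your treatment is more detailed (justifying the decay via the spectral mapping theorem and Gelfand's formula, and invoking Sylvester--Rosenblum for uniqueness), and your closing remark connecting (\ref{heinzintegral}) to Theorem~\ref{ros} is a nice addition not in the paper.
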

\begin{proof}  For a small enough $\varepsilon>0$ and large enough $K$ we have for $t>0$
$$
\|e^{-tA}Ê\| \le K e^{-\varepsilon t} \  \text{ and } \|e^{tB}Ê\| \le K e^{-\varepsilon t}.
$$
Thus, the integral converges and the claim follows by  operating with $A$ from left and  integrating by parts.
\end{proof}

Recall that under the assumptions of Theorem 1.5 the  {\it sign}-function of the block operator $M$ is well defined and can be used to solve the Sylvester equation, see (\ref{signinkautta}).  On the possibility of separation into half planes we have the following result with proof in   Section 4.

\begin{theorem} There exists a polynomial $p$ such that
\begin{equation}\label{eripuolin}
\sigma(p(A)) \subset\mathbb C_{+}  \text { and } \ \sigma(p(B)) \subset \mathbb C_{-}
\end{equation}
if and only if 
\begin{equation}\label{wideintersection}
\widehat{\sigma(A)} \cap \widehat{\sigma(B)} =\emptyset
\end{equation}
 holds.
\end{theorem}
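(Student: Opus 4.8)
The plan is to prove both directions by reducing to the previously established Theorem 1.6 (characterizing when $\widehat{\sigma(A)}\cap\sigma(B)=\emptyset$ via a polynomial making the spectral radii product small) together with the Runge-type approximation that underlies the polynomially convex hull. First I would dispose of the necessity direction. Suppose a polynomial $p$ exists with $\sigma(p(A))\subset\mathbb C_+$ and $\sigma(p(B))\subset\mathbb C_-$. By the spectral mapping theorem $\sigma(p(A))=p(\sigma(A))$ and $\sigma(p(B))=p(\sigma(B))$, so $p$ separates $\sigma(A)$ and $\sigma(B)$ into the two open half planes. Since the half planes $\mathbb C_+$ and $\mathbb C_-$ are themselves polynomially convex (they are convex) and $p$ maps $\widehat{\sigma(A)}$ into $\widehat{p(\sigma(A))}=\widehat{\sigma(p(A))}\subset\overline{\mathbb C_+}$ — here I use the standard fact that $p(\widehat K)=\widehat{p(K)}$ for a polynomial $p$ — we get $p(\widehat{\sigma(A)})\subset\overline{\mathbb C_+}$ and $p(\widehat{\sigma(B)})\subset\overline{\mathbb C_-}$. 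If $\widehat{\sigma(A)}$ and $\widehat{\sigma(B)}$ had a common point $z_0$, then $p(z_0)$ would lie in $\overline{\mathbb C_+}\cap\overline{\mathbb C_-}=i\mathbb R$; this does not immediately contradict anything, so I would instead argue at the level of the open half planes using a boundary/maximum-modulus refinement, or — cleaner — first shrink: replace $p$ by $p-i\varepsilon_0$ is wrong since that moves both; better, observe $\sigma(p(A))$ is compact in the open $\mathbb C_+$, hence $\mathrm{Re}\,p(\sigma(A))\ge\eta>0$, so by spectral mapping $\mathrm{Re}\,p > 0$ on the compact $\sigma(A)$ with a margin, and since $\mathrm{Re}\,p$ is harmonic the minimum of $\mathrm{Re}\,p$ over $\widehat{\sigma(A)}$ is attained on its boundary, which is contained in $\sigma(A)$ up to the usual hull facts — I would need to be careful that $\partial\widehat{\sigma(A)}\subset\sigma(A)$, which holds because the boundary of the hull lies in the original set. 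Symmetrically $\mathrm{Re}\,p<0$ with a margin on $\widehat{\sigma(B)}$, and disjointness follows.

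For the sufficiency direction, assume $\widehat{\sigma(A)}\cap\widehat{\sigma(B)}=\emptyset$. These are two disjoint compact polynomially convex sets. The key step is a separation lemma: there is a polynomial $q$ with $\mathrm{Re}\,q>0$ on $\widehat{\sigma(A)}$ and $\mathrm{Re}\,q<0$ on $\widehat{\sigma(B)}$. To get this, I would first find an open set $U\supset\widehat{\sigma(A)}$ and open $V\supset\widehat{\sigma(B)}$ with $\overline U\cap\overline V=\emptyset$ and $\overline U,\overline V$ each polynomially convex (possible by taking slightly enlarged sublevel sets of finitely many polynomials, exploiting the definition (\ref{polcondef})). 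The function equal to $1$ on a neighborhood of $\overline U$ and $-1$ on a neighborhood of $\overline V$ is holomorphic on a neighborhood of the polynomially convex compact set $\overline U\cup\overline V$, so by Runge's theorem (Oka--Weil in the planar case) it is uniformly approximable by polynomials; an approximant $q$ with error $<1/2$ has $\mathrm{Re}\,q>1/2$ on $\overline U$ and $\mathrm{Re}\,q<-1/2$ on $\overline V$. Then $p:=q$ (or a small real scaling thereof) satisfies $\mathrm{Re}\,p>0$ on $\sigma(A)$ and $\mathrm{Re}\,p<0$ on $\sigma(B)$, hence by the spectral mapping theorem $\sigma(p(A))=p(\sigma(A))\subset\mathbb C_+$ and $\sigma(p(B))=p(\sigma(B))\subset\mathbb C_-$, which is exactly (\ref{eripuolin}).

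The main obstacle I expect is the separation/approximation step in sufficiency: one must produce a genuine polynomial, not merely a holomorphic function, whose real part has the required sign on both hulls with a uniform margin. The delicate points are (i) arranging that the enlarged neighborhoods $\overline U,\overline V$ are still polynomially convex and disjoint — this is where the explicit description of $\widehat K$ as an intersection of polynomial sublevel sets $\{|p_k|\le\|p_k\|_K\}$ is used, taking $U=\bigcap_k\{|p_k|<\|p_k\|_K+\delta\}$ for small $\delta$ and finitely many $p_k$ that already separate the two disjoint hulls — and (ii) invoking Runge's theorem in the right form on $\overline U\cup\overline V$, which is legitimate precisely because a finite union of disjoint polynomially convex planar compacta is again polynomially convex. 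Once these are in place, everything else is the spectral mapping theorem and elementary estimates. I would also double-check the necessity direction's boundary argument; alternatively necessity can be obtained more cheaply by noting that $\overline{\mathbb C_+}$ and $\overline{\mathbb C_-}$ are polynomially convex, so $p^{-1}(\overline{\mathbb C_+})$ is a "polynomial preimage" forcing $\widehat{\sigma(A)}\subset p^{-1}(\overline{\mathbb C_+})$, and symmetrically for $B$, after which the uniform margins coming from compactness of $\sigma(p(A))$ inside the open half plane give the strict disjointness of the hulls.
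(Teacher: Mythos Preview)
Your proposal is correct and follows essentially the same route as the paper: for necessity you use (after some wandering) that $p(\widehat K)\subset\widehat{p(K)}$ together with the fact that the open half planes are polynomially convex, which is exactly the paper's argument; for sufficiency you approximate a piecewise-defined holomorphic function on a neighbourhood of $\widehat{\sigma(A)}\cup\widehat{\sigma(B)}$ by a polynomial, invoking Runge where the paper invokes Mergelyan. The only real differences are cosmetic: the paper approximates $\varphi(z)=z\pm c$ and then passes through a resolvent-integral estimate $\|\varphi(A)-p(A)\|\le\frac{\varepsilon}{2\pi}\int_{\gamma_1}\|(\lambda-A)^{-1}\|\,|d\lambda|$ to locate $\sigma(p(A))$, whereas your choice $\varphi\equiv\pm 1$ lets you read off $\mathrm{Re}\,p\gtrless 0$ on the spectra directly via the spectral mapping theorem, which is marginally cleaner; conversely, the paper's necessity argument is tidier than your maximum-principle detour --- your ``alternative'' at the end (polynomial preimages of closed half planes plus the compactness margin) is precisely the paper's one-line proof.
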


While (\ref{polcondition}) and  (\ref{eripuolin}) give  the conditions under which these separating polynomials exist,  one should expect that replacing the spectra by $\varepsilon$-pseudospectra  should give  useful information on  the difficulty of computing these polynomials. Denoting the
\begin{equation}\label{pseudo}
\Sigma_\varepsilon(T) = \{ \lambda\in \mathbb C \ : \  \text{ either } \lambda \in \sigma(T)  \text{ or } \|(\lambda-T)^{-1}\|  \ge \frac{1} {\varepsilon} \}
\end{equation}
we could ask for  how large $\varepsilon$ the conditions
$$
\widehat{\Sigma_\varepsilon(A)} \cap \Sigma_\varepsilon(B)=\emptyset \ \text{ and } \widehat{\Sigma_\varepsilon(A)} \cap \widehat{\Sigma_\varepsilon(B)}=\emptyset
$$
would hold.  However,  it seems that a more useful concept in this connection is the following inclusion set
\begin{equation}\label{veepee}
V_p(T) = \{ \lambda \in \mathbb C \ : \ |p(\lambda)| \le \| p(T)\|  \}
\end{equation}
where $p$ is a polynomial.   For  (\ref{polcondition}) we would look for a polynomial $p$ such that
$$
V_p(A) \cap \sigma(B)=\emptyset 
$$
while  for (\ref{eripuolin})  we would look for a polynomial such that 
$V_p(A\oplus B)$ separates into different components, containing $\sigma(A)$  and $\sigma(B)$, respectively.  

In the  practical search  for   separating polynomials,  Krylov methods can be uselful, but  one cannot in general guarantee that they would always produce separating polynomials when the necessary and sufficent spectral conditions hold. However,   an idealized procedure exists  with guaranteed performance.   It assumes that  one can perform minimizations of norms at polynomials of the operator and the key point is that one need not to know  about the spectrum in advance.  The following is Theorem 1.3 in [9], see also [4].

\begin{theorem}
There exists a procedure  which, given $A\in \mathcal B(\mathcal X)$, produces a sequence of compact sets $K_k \subset \mathbb C$ and  polynomials $p_k$ satisfying the following:  $K_{k+1} \subset K_k$, $ V_{p_k}(A) \subset K_k$, and
$$
\widehat{\sigma(A)} = \bigcap_{k\ge1} K_k.
$$
\end{theorem}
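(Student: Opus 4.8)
The plan is to build everything on the identity
\begin{equation}\label{hullasintersection}
\widehat{\sigma(A)}=\bigcap_{p}V_p(A),
\end{equation}
the intersection running over all polynomials, together with the fact that its right-hand side is already reached using only \emph{powers} of a single polynomial. First I would record the easy inclusion: by the spectral mapping theorem $\sup_{z\in\sigma(A)}|p(z)|=\rho(p(A))\le\|p(A)\|$, so if $\lambda\in\widehat{\sigma(A)}$ then $|p(\lambda)|\le\|p\|_{\sigma(A)}=\rho(p(A))\le\|p(A)\|$, i.e.\ $\lambda\in V_p(A)$; hence $\widehat{\sigma(A)}\subseteq V_p(A)$ for every $p$. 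For the reverse, given $\lambda\notin\widehat{\sigma(A)}$ I would pick, from the definition of the polynomially convex hull, a polynomial $p$ with $|p(\lambda)|>\|p\|_{\sigma(A)}=\rho(p(A))$; since $\|p(A)^n\|^{1/n}\to\rho(p(A))$ while $|p^n(\lambda)|=|p(\lambda)|^n$ grows geometrically with ratio $|p(\lambda)|>\rho(p(A))$, for $n$ large one gets $|p^n(\lambda)|>\|p(A)^n\|$, so $\lambda\notin V_{p^n}(A)$. This proves (\ref{hullasintersection}). The feature that matters below is that the second step consulted only \emph{norms} of polynomials in $A$ and never the location of $\sigma(A)$; this is what makes a "blind" procedure conceivable.

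Next I would turn (\ref{hullasintersection}) into an effective scheme. At stage $k$ let $p_k$ be an optimal polynomial, say the monic polynomial of degree $k$ minimising $\|p_k(A)\|$ (this minimisation is precisely the admissible operation; the minimum is attained by a routine coercivity argument when $I,A,\dots,A^{k-1}$ are independent, and trivially otherwise). Optimality makes $\rho(p_k(A))=\|p_k\|_{\sigma(A)}$ as small as the degree permits, so the lemniscates $V_{p_k^{\,n}}(A)=\{\lambda:|p_k(\lambda)|\le\|p_k(A)^n\|^{1/n}\}$ contract, as $n$ grows, onto the Chebyshev-type level set $\{\lambda:|p_k(\lambda)|\le\|p_k\|_{\sigma(A)}\}$; moreover $\|p_k(A)^n\|^{1/n}$ decreases to $\rho(p_k(A))$, and one can detect effectively that it has essentially settled by comparing its values at $n$ and $2n$. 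I would then choose $n_k$ so large that $K_k:=V_{p_k^{\,n_k}}(A)$ lies inside $K_{k-1}$ (starting from $K_1=\{z:|z|\le\|A\|\}=V_{p_1}(A)$ with $p_1(z)=z$); after renaming $p_k^{\,n_k}$ as $p_k$, the requirements $V_{p_k}(A)\subseteq K_k$ (with equality) and $K_{k+1}\subseteq K_k$ hold, and $\widehat{\sigma(A)}\subseteq V_{p_k}(A)\subseteq K_k$ for all $k$ by the first part. Because the optimal polynomials and their powers are, in the limit, as effective as any polynomial — here (\ref{hullasintersection}) and a classical lemniscate/approximation argument enter — every $\lambda\notin\widehat{\sigma(A)}$ is eventually excluded from some $K_k$; with the reverse inclusion this yields $\widehat{\sigma(A)}=\bigcap_{k\ge1}K_k$.

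The main obstacle, and the reason this is quoted from [9] rather than proved in a few lines, is the simultaneous coordination of all the stated properties: one must inflate the powers $n_k$ enough to force $K_k\subseteq K_{k-1}$ and to keep each $K_k$ a \emph{single} lemniscate (so that $V_{p_k}(A)\subseteq K_k$ is literally a statement about one polynomial), yet not so much that the sequence stalls before it has carved away a prescribed point outside $\widehat{\sigma(A)}$ — and all of this has to be carried out using only admissible norm computations, with no prior access to $\sigma(A)$. In effect one needs Hilbert's lemniscate theorem (every polynomially convex compact set is a nested intersection of polynomial lemniscates) realised constructively through the operator-Chebyshev polynomials of $A$; proving that these lemniscates can be made to contract onto $\widehat{\sigma(A)}$ in this self-certifying fashion is the technical heart, while the remaining verifications are bookkeeping.
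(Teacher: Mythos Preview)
The paper does not prove this theorem at all: it is quoted verbatim as Theorem~1.3 of [9] (with a pointer also to [4]), and no argument is given in the present paper. You yourself notice this (``the reason this is quoted from [9] rather than proved in a few lines''), so strictly speaking there is nothing to compare your attempt against here.

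That said, your sketch is on the right track and matches the only hint the paper drops about the proof in [9]: in Section~3 the author remarks that ``the process behind the proof of Theorem~1.7 is based on minimizing $\|p(A)\|$ over monic polynomials of given degree but includes a `cleaning' process --- which most likely would not usually be needed,'' motivated by the observation that a monic Chebyshev polynomial for $K=[-2,-1]\cup[1,2]$ of odd degree must vanish at~$0$, far from $K$. Your use of the optimal monic polynomials $p_k$ and their powers is exactly this minimisation step, and the honest gap you flag --- that optimal polynomials plus powers need not, on their own, exclude an arbitrary $\lambda\notin\widehat{\sigma(A)}$ without further intervention --- is precisely what the ``cleaning'' is there to fix. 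The identity $\widehat{\sigma(A)}=\bigcap_p V_p(A)$ and its proof via the spectral radius formula are correct and are the natural starting point; what remains (and what you correctly defer to [9]) is the constructive coordination of nesting, single-lemniscate form, and eventual exclusion using only norm evaluations.
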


In Section 2 we show how the post-processing is done.  Sections 3 and 4 contain proofs of Theorems 1.4 and 1.6  and  refinement of these. 
 
At the end in Section 5 we take a somewhat different approach.   We assume that  we have a polynomial $p$ such that $V_p(M)$ separates into two components in which we define a piecewise constant holomorphic function.   Using multicentric representation of this function we obtain a series expansion  in terms of $p(M)$  from which the  solution for the Sylvester equation can  be read out in the same way as from sgn$(M)$.  The coefficients of the series expansions can be computed with an explicit recursion depending on the polynomial $p$.

 %TOKA LUKU ALKAA
  
 \section{Post-processing}
 
 Assume  that  one has in one way or another solved the modified equation (\ref{modified}).  We assume that we know the operators $A\in \mathcal B(\mathcal X),B\in \mathcal B(\mathcal Y)$ and $Y\in \mathcal B(\mathcal Y,\mathcal X)$ and the  (scalar) polynomial $p$.  We shall use the bivariate polynomial calculus to write down the solution $X$ satisfying (\ref{original}).  To that end we associate with $p$ the bivariate  polynomial $q$ as the divided difference of $p$:
 \begin{equation}\label{defkuu}
 q(\lambda, \mu) = \frac{p(\lambda)-p(\mu)}{\lambda-\mu}.
 \end{equation}
 Denote  $q_{k-1}(\lambda, \mu)= \lambda^{k-1} + \lambda^{k-2}Ê\mu + \cdots + \mu^{k-1} $  with $q_0=1$.
 Since $\lambda^k-\mu^k = (\lambda - \mu) q_{k-1}(\lambda,\mu)$     we  then  have with $p(\lambda)= \sum_{j=0}^d \alpha_j \lambda^j$
\begin{equation}
q(\lambda,\mu)=\sum_{j=1}^d \alpha_j q_{j-1}(\lambda,\mu).
\end{equation}
On bivariate holomorphic functional calculus we recommend [7]. Since we deal here only with polynomials we can give the calculus without reference to integral  representations.  In the notation of [7],  $q\{A,B^T\} (C)$   stands for our $q(A,B)(C)$.  

\begin{definition}  Let the operators $A\in \mathcal B(\mathcal X),B\in \mathcal B(\mathcal Y)$ and $C\in \mathcal B(\mathcal Y,\mathcal X)$ and  the  polynomial $f(\lambda, \mu)= \sum_{i, j} \alpha_{ij} \lambda^{i} \mu^j$ be given.  Then we denote by $f(A,B)$  the bounded linear operator in $\mathcal B(\mathcal Y,  \mathcal X)$:
\begin{equation}\label{operdef}
f(A,B): \ C \mapsto  f(A,B)(C)= \sum_{i,j} \alpha_{ij} A^{i}C B^j.
\end{equation}

\end{definition}
When  $f$ is holomorphic in two variables one defines  $f(A,B)$ using a double integral and  based on that one can prove that if $h(\lambda,\mu) = g(\lambda,\mu)f(\lambda,\mu)$ one gets $h(A,B)(C) = g(A,B)(f(A,B)(C))$.  For polynomials   this is obvious from (\ref{operdef}) as we may work  termwise.  If $g(\lambda,\mu)=\lambda^m \mu^n$, $f(\lambda,\mu)=\lambda^{i} \mu^j$ then $g(\lambda,\mu)f(\lambda, \mu) = \lambda^{ i+m}\mu^{j+n} =h(\lambda, \mu)$ and   we have
$$
g(A,B)(f(A,B)(C)) = A^m(A^{i} C B^j)B^n = A^{m+i}CB^{n+j} = h(A,B)(C).
$$
Taking linear combinations  we  see that   $h(A,B) = g(A,B)\circ f(A,B)$ holds for polynomials $f$, $ g$ where $h= g f$.

Consider now  the post-processing step which is contained in the following simple result. 

\begin{proposition}   Let $A\in \mathcal B(\mathcal X),B\in \mathcal B(\mathcal  Y)$ and $Y\in \mathcal B(\mathcal Y,\mathcal X)$ be given and  a polynomial $p$, such that (\ref{modified}) holds.  Then
\begin{equation}
X=q(A,B)(Y)
\end{equation}
satisfies  the original Sylvester equation (\ref{original}).
\end{proposition}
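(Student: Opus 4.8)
The plan is to exploit the multiplicativity of the bivariate polynomial calculus that was just verified in the text, namely that $h(A,B) = g(A,B)\circ f(A,B)$ whenever $h=gf$ as polynomials in two variables. No new machinery is needed beyond this and the definitions.

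First I would introduce the elementary polynomial $g(\lambda,\mu)=\lambda-\mu$ and record two trivial observations. From the definition (\ref{operdef}) of the calculus, $g(A,B)(Z) = AZ - ZB$ for every $Z\in\mathcal B(\mathcal Y,\mathcal X)$; and from the very definition (\ref{defkuu}) of $q$ as the divided difference of $p$ we have $g(\lambda,\mu)\,q(\lambda,\mu) = p(\lambda)-p(\mu)$ as scalar polynomials. Also, reading off (\ref{operdef}) again, $(p(\lambda)-p(\mu))(A,B)(Y) = p(A)Y - Y p(B)$.

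Next, with $X=q(A,B)(Y)$, I apply the composition rule with $f=q$ and $h = gq = p(\lambda)-p(\mu)$ to obtain
$$
AX - XB \;=\; g(A,B)\bigl(q(A,B)(Y)\bigr) \;=\; \bigl(p(\lambda)-p(\mu)\bigr)(A,B)(Y) \;=\; p(A)Y - Y p(B),
$$
and the right-hand side equals $C$ by the standing hypothesis (\ref{modified}). This completes the argument.

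I do not expect a genuine obstacle here; the only point deserving a moment's care is that the multiplicative identity of the calculus is being invoked with the scalar factors in a particular order, but since $gq=qg$ and the identity was checked termwise for arbitrary polynomial factors in the paragraph preceding the proposition, this is automatic. If one prefers to avoid the composition rule, the same conclusion follows by a direct telescoping computation: expand $q(A,B)(Y)=\sum_{j=1}^d \alpha_j \sum_{k=0}^{j-1} A^k Y B^{\,j-1-k}$, multiply by $A$ on the left and subtract the same expression multiplied by $B$ on the right, and observe that each inner sum collapses to $A^j Y - Y B^j$, giving $\sum_{j=1}^d \alpha_j\,(A^j Y - Y B^j) = p(A)Y - Y p(B) = C$.
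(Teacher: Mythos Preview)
Your proof is correct and follows essentially the same route as the paper: both exploit the scalar identity $p(\lambda)-p(\mu)=(\lambda-\mu)\,q(\lambda,\mu)$ together with the multiplicativity of the bivariate polynomial calculus established just before the proposition to conclude that $AX-XB=p(A)Y-Yp(B)=C$. Your write-up is simply more explicit (introducing $g(\lambda,\mu)=\lambda-\mu$ and also offering the direct telescoping alternative), but there is no substantive difference in approach.
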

\begin{proof}
We have
$$
p(\lambda)-p(\mu) = (\lambda-\mu) q(\lambda, \mu).
$$
 
Taking the left hand side as a polynomial of two variables  and applying the polynomial functional calculus yields, by (\ref{modified}),   $p(A)Y-Yp(B)=C$.  Now  the right hand side  gives 
$ A q(A,B)(Y) - q(A,B)(Y) B = AX-XB$ , completing the proof.

\end{proof}

\begin{example} Let $A$ be a nonsingular real {\it symmetric} matrix,  $B$  a real {\it skew symmetric} one.  Then $A^2$ is positive definite while $B^2$ is negative semidefinite and 
\begin{equation}\label{skewcase}
Y= \int_0^\infty e^{-tA^2} C e^{tB^2}dt
\end{equation}
solves the modified equation.  Now $q(\lambda, \mu) = \lambda +\mu$ and we have  the solution of the original Sylvester equation as
$$
X=q(A,B)(Y)= AY +YB = \int_0^\infty  (Ae^{-tA^2}C e^{tB^2}  + e^{-tA^2}C e^{tB^2}B )dt.
$$
The simple choice,  $p(\lambda)= \lambda^2$ works naturally  in a somewhat lager set of matrices.  In fact, if   there exists $\theta <1$ such that  if $\alpha+i\beta \in \sigma(A)$ then
$|\beta| \le \theta |\alpha|$ while with $\gamma + i \delta \in \sigma(B)$ we ask for $|\gamma| \le \theta |\delta|$.  If at least one of $A$ or $B$ is nonsingular, then again  
the integral in (\ref{skewcase}) converges.
\end{example}

%prepross
Denoting $S(\lambda,\mu)=\lambda-\mu$ the solution operator is the inverse of $S(A,B)$   satisfying
\begin{equation}\label{inversiokompositio}
S(A,B)^{-1}=q(A,B) \circ S(p(A),p(B))^{-1}.
\end{equation}

Extending the bivarite polynomial calculus to holomorphic calculus one can show that if $f,g$ are holomorphic in two variables near the spectra and $h=gf$, then 
\begin{equation}
g(A,B)\circ f(A,B)=h(A,B),
\end{equation}
see e.g. Lemma 4.2 in [7].  Assuming this allows us to commute the terms in
(\ref{inversiokompositio}) and we conclude that rather than post-processing with $q(A,B)$ we may equally well begin with processing $C$.  Clearly the order of computation is not the same but  the operations needed to be excecuted essentially are.  To summarise:

\begin{proposition} Let $A\in \mathcal B(\mathcal X),B\in \mathcal B(\mathcal  Y)$ and $C\in \mathcal B(\mathcal Y,\mathcal X)$ be given and  a polynomial $p$ such that $\sigma(p(A)) \cap \sigma(p(B)) = \emptyset$.  Then 
\begin{equation}
p(A)X-Xp(B)= q(A,B)(C)
\end{equation}
has a unique solution $X$ which also satisfies (\ref{original}).
\end{proposition}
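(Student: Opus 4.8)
The plan is to leverage the composition identity \eqref{inversiokompositio} together with the commutativity of the holomorphic bivariate calculus mentioned just before the statement. First I would verify that the equation $p(A)X-Xp(B)=q(A,B)(C)$ indeed has a unique solution: the operator $S(p(A),p(B)):X\mapsto p(A)X-Xp(B)$ is exactly the Sylvester operator associated with the pair $(p(A),p(B))$, and since $\sigma(p(A))\cap\sigma(p(B))=\emptyset$, Theorem 1.1 (Sylvester--Rosenblum) applies to this pair and gives a unique $X\in\mathcal B(\mathcal Y,\mathcal X)$ solving it for every right-hand side, in particular for $q(A,B)(C)$.

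Next I would show this $X$ solves the original equation \eqref{original}. The quickest route: let $Y:=S(p(A),p(B))^{-1}(C)$ be the unique solution of the modified equation \eqref{modified}, i.e.\ $p(A)Y-Yp(B)=C$. By Proposition 2.2, $X':=q(A,B)(Y)$ solves \eqref{original}. It remains to see that $X'$ also solves $p(A)X'-Xp(B)=q(A,B)(C)$, and that this identifies $X'$ with the $X$ of the statement. Applying $S(p(A),p(B))$ to $X'=q(A,B)(Y)$ and using that the bivariate polynomial calculus commutes — $S(p(\cdot),p(\cdot))$ and $q(\cdot,\cdot)$ commute as operators on $\mathcal B(\mathcal Y,\mathcal X)$ because the underlying scalar polynomials $S(p(\lambda),p(\mu))=p(\lambda)-p(\mu)$ and $q(\lambda,\mu)$ multiply commutatively — we get
$$
S(p(A),p(B))\big(q(A,B)(Y)\big)=q(A,B)\big(S(p(A),p(B))(Y)\big)=q(A,B)(C).
$$
So $X'$ solves $p(A)X-Xp(B)=q(A,B)(C)$; by the uniqueness established in the first step, $X'=X$, and since $X'$ solves \eqref{original}, so does $X$.

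Alternatively, and perhaps more in the spirit of the paragraph preceding the statement, one can argue purely at the level of solution operators: from \eqref{inversiokompositio} we have $S(A,B)^{-1}=q(A,B)\circ S(p(A),p(B))^{-1}$, and commuting the two factors (justified by Lemma 4.2 of [7], applied to the polynomials $q$ and $1/(p(\lambda)-p(\mu))$, which are holomorphic near $\sigma(A)\times\sigma(B)$ under the hypothesis) gives $S(A,B)^{-1}=S(p(A),p(B))^{-1}\circ q(A,B)$. Reading this applied to $C$: the original solution $X=S(A,B)^{-1}(C)$ equals $S(p(A),p(B))^{-1}(q(A,B)(C))$, which is precisely the unique solution of $p(A)X-Xp(B)=q(A,B)(C)$.

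The only genuine subtlety — the "main obstacle" — is the justification that the bivariate calculus commutes in the needed sense, i.e.\ that one may interchange $q(A,B)$ with the inverse Sylvester operator $S(p(A),p(B))^{-1}$. For the polynomial factor $q$ this commutation is the elementary termwise computation already carried out in the text ($h(A,B)=g(A,B)\circ f(A,B)$ for polynomials), but to commute it past an \emph{inverse} one needs the holomorphic extension of the calculus and the multiplicativity result (Lemma 4.2 in [7]) applied to the function $(\lambda,\mu)\mapsto 1/(p(\lambda)-p(\mu))$, which is holomorphic on a neighbourhood of $\sigma(A)\times\sigma(B)$ exactly because $\sigma(p(A))\cap\sigma(p(B))=\emptyset$ forces $p(\lambda)\neq p(\mu)$ there. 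Everything else is bookkeeping.
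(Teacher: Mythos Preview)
Your proposal is correct, and your ``alternative'' route is precisely the paper's argument: the paragraph immediately preceding the proposition invokes the holomorphic multiplicativity (Lemma~4.2 in [7]) to commute the factors in \eqref{inversiokompositio}, obtaining $S(A,B)^{-1}=S(p(A),p(B))^{-1}\circ q(A,B)$, and then states the proposition as a summary.

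Your first route, however, is genuinely more elementary than what the paper does and deserves a remark. By introducing $Y$ and $X'=q(A,B)(Y)$ and then checking $S(p(A),p(B))(X')=q(A,B)(C)$, you only ever commute two \emph{polynomial} operators, namely $q(A,B)$ and $S(p(A),p(B))$ (the latter being the bivariate polynomial $p(\lambda)-p(\mu)$); this is exactly the termwise identity $h(A,B)=g(A,B)\circ f(A,B)$ already established in the text for polynomials, with no appeal to the holomorphic calculus needed. Uniqueness in the $(p(A),p(B))$ equation then closes the argument. So the ``main obstacle'' you flag --- commuting $q(A,B)$ past the \emph{inverse} $S(p(A),p(B))^{-1}$ via Lemma~4.2 of [7] --- is real for the paper's approach and for your alternative, but your first approach sidesteps it entirely. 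The paper's route has the advantage of expressing the result as a clean operator identity at the level of solution maps; your first route buys a self-contained proof that stays within the polynomial calculus developed in Section~2.
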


%KOLMAS LUKU ALKAA

\section{Disc separation}

As before, $A \in \mathcal B(\mathcal X)$ and $B\in \mathcal B(\mathcal Y)$  and here we  consider the  convergence condition $\rho(p(A)) \  \rho(p(B)^{-1} )<1.$  
Theorem 1.4 covers the existence  of such polynomials and we give the proof here.  We also  derive an expression for the  normalized infimum of  the product of spectral radii.   At the end of this section we discuss  a more quantitative result.

If the spaces are finite dimensional, or more generally, if $A$ is an algebraic operator, then there exists a  minimal polynomial $m_A $ such that $m_A(A)=0$, and  assuming  $\sigma(A)\cap \sigma(B)=\emptyset$,  then trivially $\rho(m_A(A)) \ \rho(m_A(B)^{-1})=0$.  However,  the degree of $m_A$ may be impractically high and computation of $m_A$ unstable.  

 \bigskip
  
{\it Proof of Theorem 1.4}

Suppose first that
$\lambda_0 \in \widehat{\sigma(A)} \cap \sigma(B)$ and let $p$ be a polynomial such that $p(B)$ is invertible. Then
$$
|p(\lambda_0)| \ge \min_{\mu \in \sigma(B)} |p(\mu)| = 1/ \rho(p(B)^{-1}).
$$
Since $\rho(p(A)) \ge |p(\lambda_0)|$ we have
$$
\rho(p(A)) \ \rho(p(B)^{-1}) \ge |p(\lambda_0)| |p(\lambda_0)|^{-1} =1
$$ 
and we see that the condition (\ref{firstcond})
 is necessary.

Assume then that (\ref{firstcond}) holds.  As $\widehat{\sigma(A)}$ and $\sigma(B)$ are both compact, there exists an open $U$ such that $\widehat{\sigma(A)} \subset U$ while $\sigma(B) \cap U = \emptyset.$  By Hilbert Lemniscate  Theorem, see e.g. Theorem 5.5.8 in [13], there exists a polynomial $p$ such that
\begin{equation}
{|p(z)|} >\|p\|_{\sigma(A)}  \  \text{ for } z\in \mathbb C \setminus U.
\end{equation}
Thus, in particular $$1/ \rho(p(B)^{-1})=\min_{\mu \in \sigma(B)} |p(\mu)|>\|p\|_{\sigma(A)} = \rho(p(A))$$
and so
$
\rho(p(A)) \  \rho(p(B)^{-1}) <1,
$
completing the proof.  $\hfill \Box$
\bigskip

In practical computation, the spectral radius $\rho(p(A))$  should rather be replaced by $\|p(A)\|$ 
and scaled properly. To that end put
\begin{equation}
\eta(A,B)= \inf (\|p(A)\| \|p(B)^{-1}\| ) ^{1/{deg(p)}}
\end{equation}
where the infimum is over all polynomials $p$.  
\begin{lemma}
We have 
\begin{equation}\label{movingtospectral}
\eta(A,B) = \inf (\rho(p(A)) \  \rho(p(B)^{-1}))^{1/{deg(p)}}.
\end{equation}
\end{lemma}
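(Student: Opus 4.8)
The plan is to establish the two inequalities separately. One direction is immediate: since $\rho(T)\le\|T\|$ for every bounded operator $T$, we have $\rho(p(A))\le\|p(A)\|$ and $\rho(p(B)^{-1})\le\|p(B)^{-1}\|$ for every polynomial $p$ with $p(B)$ invertible. Raising the product to the power $1/\deg p$ and taking the infimum over all such $p$ gives
\[
\inf\bigl(\rho(p(A))\ \rho(p(B)^{-1})\bigr)^{1/\deg p}\ \le\ \eta(A,B).
\]
Here one notes at the outset that $p(B)$ is invertible exactly when $p$ does not vanish on $\sigma(B)$, so the two infima in (\ref{movingtospectral}) range over the same class of polynomials; those $p$ for which $p(B)$ is singular contribute $+\infty$ to both sides and may be ignored.

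For the reverse inequality I would fix an arbitrary admissible polynomial $p$ of degree $d$ and feed its powers $p^n$, $n=1,2,\dots$, into the definition of $\eta$. Because the polynomial calculus is multiplicative, $p^n(A)=p(A)^n$ and $p^n(B)$ is invertible with inverse $(p(B)^{-1})^n$, while $\deg p^n = dn$, so
\[
\eta(A,B)\ \le\ \bigl(\|p(A)^n\|\ \|(p(B)^{-1})^n\|\bigr)^{1/(dn)}
=\Bigl(\|p(A)^n\|^{1/n}\ \|(p(B)^{-1})^n\|^{1/n}\Bigr)^{1/d}.
\]
Letting $n\to\infty$ and invoking Gelfand's spectral radius formula $\|T^n\|^{1/n}\to\rho(T)$ with $T=p(A)$ and with $T=p(B)^{-1}$, the right-hand side converges to $\bigl(\rho(p(A))\ \rho(p(B)^{-1})\bigr)^{1/d}$. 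Hence $\eta(A,B)\le\bigl(\rho(p(A))\ \rho(p(B)^{-1})\bigr)^{1/\deg p}$ for every admissible $p$, and taking the infimum over $p$ yields the reverse inequality. Combining the two proves (\ref{movingtospectral}).

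There is no real obstacle here; the only point requiring care is the bookkeeping of exponents. It is precisely the normalization by $1/\deg p$ that renders the substitution $p\mapsto p^n$ harmless and lets Gelfand's formula bridge the gap between operator norms and spectral radii. One could equally phrase the argument symmetrically in $A$ and $B^{-1}$, but the version above suffices.
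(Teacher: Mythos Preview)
Your proof is correct and follows essentially the same approach as the paper: one inequality is immediate from $\rho(T)\le\|T\|$, and the other is obtained by substituting powers $p^n$ into the definition of $\eta(A,B)$ and invoking Gelfand's spectral radius formula. The only cosmetic difference is that the paper first fixes an $\varepsilon$-near-optimal polynomial $q$ on the spectral-radius side and then passes to powers, whereas you fix an arbitrary $p$, pass to powers, and take the infimum afterward; the two presentations are equivalent.
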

\begin{proof}
The claim follows from the spectral radius formula.  In fact, given $\varepsilon>0$ there exists a polynomial $q$ of degree $k$ such that
$$
(\rho(q(A)) \ \rho(q(B)^{-1})^{1/k} <  \inf (\rho(p(A)) \  \rho(p(B)^{-1}))^{1/{deg(p)}} + \varepsilon.
$$
But we have  as $n\rightarrow \infty$
$$
\|q(A)^n\|^{1/kn} \|q(B)^{-n}\|^{1/kn} \rightarrow (\rho(q(A)) \ \rho(q(B)^{-1})^{1/k} 
$$
so that $\eta(A,B)$ cannot be larger than $\inf (\rho(p(A)) \  \rho(p(B)^{-1}))^{1/{deg(p)}}$.  As it trivially cannot be smaller,  (\ref{movingtospectral})  holds.

\end{proof}

It is of interest to know how small $\eta(A,B)$ can be.  Given a polynomially convex compact  set $K$ with positive logarithmic capacity, denote by $g$ the Green's function of the complement of $K$, with singularity at $\infty$. That is,  $g$ is harmonic in $\mathbb C\setminus K$, 
$$
g(z)= \log(z)+ O(1),  \ \text{ as }  \ z\rightarrow \infty
$$
and such that  for nearly everywhere on $\partial K$ $g(\zeta)\rightarrow 0$ as $\zeta$ tends to $\partial K$ from $\mathbb C\setminus K$, e.g. [13].   

\begin{theorem}\label{speedy}  Assume (\ref{firstcond}) holds and  $A$ is such that $\text{cap}(\widehat{\sigma(A)}) >0.$     Denote by $g$ the Green's function of  $\mathbb C \setminus \widehat{\sigma(A)}$.  Set $\alpha = \min_{\mu \in \sigma(B)} g(\mu)$.  Then  we have $0<\alpha <\infty$ and 
\begin{equation} 
\eta(A,B) =   e^{-\alpha}.
\end{equation}

\end{theorem}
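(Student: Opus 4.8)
The plan is to combine the asymptotic estimates for norms of polynomials on a polynomially convex set (expressed through the Green's function) with the characterization of $\eta(A,B)$ via spectral radii from Lemma 3.2. By Lemma 3.2 it suffices to analyze $\inf(\rho(p(A))\,\rho(p(B)^{-1}))^{1/\deg p}$ over all polynomials $p$ with $p(B)$ invertible. Writing $K=\widehat{\sigma(A)}$, I have $\rho(p(A))=\|p\|_{\sigma(A)}=\|p\|_K$ by the spectral mapping theorem and the definition of the polynomially convex hull, while $\rho(p(B)^{-1})=1/\min_{\mu\in\sigma(B)}|p(\mu)|$. Hence the quantity to be minimized is
\begin{equation}
\Big(\frac{\|p\|_K}{\min_{\mu\in\sigma(B)}|p(\mu)|}\Big)^{1/\deg p}.
\end{equation}
First I would note that $\alpha=\min_{\mu\in\sigma(B)}g(\mu)$ is finite since $g$ is finite away from $K$, and strictly positive: $g>0$ on $\mathbb C\setminus K$, and by (\ref{firstcond}) the compact set $\sigma(B)$ is disjoint from $K$, so $g$ attains a positive minimum on it. (If $\sigma(B)$ met the "thin" exceptional set on $\partial K$ where $g$ fails to vanish this would need a word, but $\sigma(B)$ is disjoint from $K$ altogether, so $g$ is continuous and positive on a neighborhood of $\sigma(B)$.)

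For the lower bound $\eta(A,B)\ge e^{-\alpha}$, the key tool is the Bernstein–Walsh inequality: for any polynomial $p$ of degree $n$ and any $z\notin K$,
\begin{equation}
|p(z)| \le \|p\|_K\, e^{n\, g(z)}.
\end{equation}
Applying this at a point $\mu\in\sigma(B)$ where $g(\mu)=\alpha$ gives $\min_{\mu\in\sigma(B)}|p(\mu)|\le |p(\mu)|\le \|p\|_K e^{n\alpha}$, hence $\|p\|_K/\min_{\mu}|p(\mu)|\ge e^{-n\alpha}$, and taking $n$-th roots yields the bound uniformly in $p$. For the matching upper bound $\eta(A,B)\le e^{-\alpha}$, I would invoke the sharpness of Bernstein–Walsh, realized through Faber polynomials or Fekete/Leja points: there is a sequence of polynomials $p_n$ (e.g. suitably normalized Faber polynomials of $K$, or $p_n(z)=\prod(z-z_j^{(n)})$ for Fekete points $z_j^{(n)}$) with $\deg p_n=n$, $\|p_n\|_K^{1/n}\to \mathrm{cap}(K)$, and $|p_n(z)|^{1/n}\to \mathrm{cap}(K)\,e^{g(z)}$ locally uniformly on $\mathbb C\setminus K$. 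Then
\begin{equation}
\Big(\frac{\|p_n\|_K}{\min_{\mu\in\sigma(B)}|p_n(\mu)|}\Big)^{1/n} \longrightarrow \frac{\mathrm{cap}(K)}{\mathrm{cap}(K)\,e^{\alpha}} = e^{-\alpha},
\end{equation}
using that the minimum over the finite-or-compact set $\sigma(B)$ of the $n$-th roots converges to $\mathrm{cap}(K)e^{\alpha}$ by uniform convergence and continuity of $g$ there. Since $p_n(B)$ is invertible for large $n$ (because $\min_{\mu\in\sigma(B)}|p_n(\mu)|>0$, the $p_n$ having no zeros on $\sigma(B)$ for $n$ large), these are admissible, giving $\eta(A,B)\le e^{-\alpha}$ and hence equality.

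The main obstacle I anticipate is the upper bound: one needs polynomials that are simultaneously nearly extremal for the sup-norm on $K$ (achieving the capacity exponent) and do not collapse in value on $\sigma(B)$. This is exactly the content of the fine asymptotics of Fekete/Leja polynomials (or Faber polynomials when $K$ has connected complement), and the delicate point is the locally uniform convergence $|p_n(z)|^{1/n}\to \mathrm{cap}(K)e^{g(z)}$ off $K$ together with controlling it uniformly over the compact set $\sigma(B)$; this is standard potential theory (see e.g. [13]) but is where the real work sits. Everything else — the reduction via Lemma 3.2, the Bernstein–Walsh lower bound, and the positivity/finiteness of $\alpha$ — is routine.
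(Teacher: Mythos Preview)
Your proposal is correct and follows essentially the same route as the paper: Lemma 3.2 reduces to spectral radii, Bernstein--Walsh (the paper's ``Bernstein's Lemma,'' Theorem 5.5.7 in [13]) gives the lower bound $\eta(A,B)\ge e^{-\alpha}$, and Fekete polynomials furnish the matching upper bound. The only cosmetic difference is that the paper invokes the explicit lower estimate $|p(z)|^{1/d}\ge \|p\|_K^{1/d}\,e^{g(z)}\big(\mathrm{cap}(K)/\delta_d(K)\big)^{\tau(z)}$ from the same theorem in [13] (with the Harnack-distance correction tending to $1$), whereas you phrase it via the equivalent asymptotic $|p_n(z)|^{1/n}\to \mathrm{cap}(K)\,e^{g(z)}$ together with $\|p_n\|_K^{1/n}\to \mathrm{cap}(K)$; since Fekete points lie in $K$ you in fact have $p_n(B)$ invertible for every $n$, not just large $n$.
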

\begin{proof} Here we use Bernstein's Lemma, as formulated in Theorem 5.5.7  of  [13]. Since $\sigma(B)$ and $\widehat{\sigma(A)}$ are  both compact, there is a positive distance between them  and  since $g$ is continuous and postive, we conclude $0< \alpha < \infty$.  Then Bernstein's Lemma yields for any polynomial $p$ of degree $d$
$$
\min_{\mu\in \sigma(B)} |p(\mu)|^{1/d} \le e^{ \alpha} \  \|p\|_{\sigma(A)}^{1/d} \  
$$
which means 
$$
\rho(p(B)^{-1})^{1/d} \ge e^{-\alpha} \  \rho(p(A))^{-1/d}.
$$
Thus
$$
\rho(p(B)^{-1})^{1/d}\rho(p(A))^{1/d} \ge e^{-\alpha}.
$$
To get $\eta(A,B)$ bounded from above we use  the following part of Theorem 5.5.7, [13]:  if $p$ is a Fekete polynomial  for $\widehat{\sigma(A)}$ of degree $d>1$, then 
$$
|p(z)| ^{1/d} \ge \|p\|_{\sigma(A)} ^{1/d} \ e^{g(z)} h(z,d)  \text{ for all } z \in \mathbb C \setminus \widehat{\sigma(A)}.
$$
Here $h$ is as follows:
$$
h(z,d)=\Big( \frac{ \text{cap} (\widehat{\sigma(A))}} {\delta_d(\widehat{\sigma(A))}} \Big)^{\tau(z)}
$$ where $\tau$ is the Harnack distance for $\mathbb C \setminus \widehat{\sigma(A)}$.  For us it suffices to know that $\tau$ is continuous and that
$$
\delta_n(K) \rightarrow \text{cap}(K) \ \text{ as } nÊ\rightarrow \infty.
$$
Thus,  for any $\varepsilon>0$ there exists a Fekete polynomial $p$  of degree $d$ such that
$$
\max_{\mu\in \sigma(B)}h(\mu,d) > \frac{1}{1+\varepsilon}.
$$
But then 
$$
\rho(p(B)^{-1})^{1/d} \le \rho(p(A))^{-1/d} e^{-\alpha} (1+\varepsilon).
$$
Multiplying this with $\rho(p(A))^{1/d}$ gives 
$$
\eta(A,B) \le \rho(p(B)^{-1})^{1/d} \  \rho(p(A))^{1/d} \le e^{-\alpha} (1+\varepsilon)
$$
which implies  the bound from above.
\end{proof}

Recall, that operators $A \in \mathcal B(\mathcal X)$ are called {\it quasialgebraic} if there exists a sequence  $\{p_j\}$ of monic polynomials such that
\begin{equation}\label{quasialg}
\inf \|p_j(A)\| ^{1/ deg(p_j)} =0.
\end{equation}
Halmos [3]  has shown that a bounded operator is quasialgberaic if and only if  the capacity of  its spectrum vanishes.  So, quasinilpotent, compact, polynomially compact, Riesz operators ect, are all quasialgebraic.   
%KESKEN...

\begin{theorem}  Let $A\in \mathcal B(\mathcal X)$, $B\in \mathcal B(\mathcal Y)$
satisfy $\widehat{\sigma(A)} \cap \sigma(B)=\emptyset$.  Then $\eta(A,B)=0$ if and only if $A$ is  quasialgebraic and, in particular $\widehat{\sigma(A)} = \sigma(A)$.
\end{theorem}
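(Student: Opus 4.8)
The plan is to combine the theorem of Halmos quoted above with Theorem~\ref{speedy} and an explicit family of Fekete polynomials. Since $A$ is quasialgebraic if and only if $\text{cap}(\sigma(A))=0$, it suffices to prove
$$
\eta(A,B)=0\iff\text{cap}(\sigma(A))=0 .
$$
Write $K=\widehat{\sigma(A)}$ and $\delta=\text{dist}(K,\sigma(B))$, which is strictly positive because $K$ and $\sigma(B)$ are compact, nonempty and disjoint. Throughout I would work with the spectral form $\eta(A,B)=\inf(\rho(p(A))\,\rho(p(B)^{-1}))^{1/\deg p}$ supplied by (\ref{movingtospectral}).

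The implication ``$\eta(A,B)=0\Rightarrow\text{cap}(\sigma(A))=0$'' I would prove by contraposition. If $\text{cap}(\sigma(A))>0$ then $\text{cap}(K)\ge\text{cap}(\sigma(A))>0$, so Theorem~\ref{speedy} applies and yields $\eta(A,B)=e^{-\alpha}$ with $0<\alpha<\infty$; in particular $\eta(A,B)>0$. Hence $\eta(A,B)=0$ forces $\text{cap}(\sigma(A))=0$, that is, $A$ quasialgebraic. One cannot obtain the converse merely by letting $\text{cap}(K)\to 0$ in the formula $\eta(A,B)=e^{-\alpha}$, since the Green's function defining $\alpha$ no longer exists once the capacity vanishes; a direct argument is needed.

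For ``$\text{cap}(\sigma(A))=0\Rightarrow\eta(A,B)=0$'', assume $\text{cap}(\sigma(A))=0$. A compact set of capacity zero does not separate the plane, so $\mathbb C\setminus\sigma(A)$ is connected and therefore $K=\widehat{\sigma(A)}=\sigma(A)$, with $\text{cap}(K)=0$; this gives the ``in particular'' assertion. Now let $F_n(z)=\prod_{k=1}^n(z-\zeta_k^{(n)})$ be a Fekete polynomial for $K$, where $\zeta_1^{(n)},\dots,\zeta_n^{(n)}$ are $n$-th Fekete points of $K$. By definition these nodes lie in $K$, so every zero of $F_n$ lies in $K$; and from the facts recalled in the proof of Theorem~\ref{speedy} (notably $\delta_n(K)\to\text{cap}(K)$, Theorem~5.5.7 of [13]) one obtains $\|F_n\|_K^{1/n}\to\text{cap}(K)=0$. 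Since each $\zeta_k^{(n)}\in K$ and $\text{dist}(\mu,K)\ge\delta$ for $\mu\in\sigma(B)$, we have $|F_n(\mu)|\ge\delta^n$ for every $\mu\in\sigma(B)$; thus $F_n(B)$ is invertible and, by the spectral mapping theorem, $\rho(F_n(B)^{-1})=(\min_{\mu\in\sigma(B)}|F_n(\mu)|)^{-1}\le\delta^{-n}$, while $\rho(F_n(A))=\|F_n\|_{\sigma(A)}=\|F_n\|_K$. Consequently
$$
\eta(A,B)\le\inf_n(\rho(F_n(A))\,\rho(F_n(B)^{-1}))^{1/n}\le\inf_n\delta^{-1}\|F_n\|_K^{1/n}=0 .
$$

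The one genuinely delicate point is this reverse direction: a polynomial making $\rho(p(A))$ small need not keep $\rho(p(B)^{-1})$ bounded, because its zeros might accumulate near $\sigma(B)$. Fekete polynomials resolve this because their nodes are confined to $\widehat{\sigma(A)}$ while still realizing $\text{cap}(\widehat{\sigma(A)})=0$ as the limit of $\|F_n\|_K^{1/n}$; everything else reduces to the spectral mapping theorem and (\ref{movingtospectral}).
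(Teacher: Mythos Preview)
Your proof is correct. The forward implication (if $\eta(A,B)=0$ then $A$ is quasialgebraic) is handled exactly as in the paper, by contraposition via Theorem~\ref{speedy}.

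For the converse, however, you take a different and more elementary route than the paper. The paper argues indirectly: it chooses a nested sequence of compact neighbourhoods $K_n$ shrinking to $\sigma(A)$, invokes the Hilbert Lemniscate Theorem to produce polynomials whose filled lemniscates separate $K_{n+1}$ from the complement of $K_n$, and then observes that the associated Green's functions blow up on $\sigma(B)$. Your argument bypasses both the Hilbert Lemniscate Theorem and the Green's function machinery: you use Fekete polynomials $F_n$ for $K=\widehat{\sigma(A)}=\sigma(A)$ directly, exploit that Fekete nodes lie in $K$ to obtain the explicit lower bound $|F_n(\mu)|\ge\delta^n$ on $\sigma(B)$, and combine this with $\|F_n\|_K^{1/n}\to\text{cap}(K)=0$. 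This is cleaner and gives a transparent quantitative estimate $(\rho(F_n(A))\rho(F_n(B)^{-1}))^{1/n}\le\delta^{-1}\|F_n\|_K^{1/n}$, whereas the paper's sketch leaves the rate implicit in the Green's function growth. Your explicit justification that capacity zero forces $\widehat{\sigma(A)}=\sigma(A)$ (because a polar compact set cannot separate the plane) is also a point the paper states but does not argue.
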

\begin{proof}
That $A$ being quasialgebraic is necessary, follows immediately from Theorem \ref{speedy}.  To obtain the other direction one needs to conclude that the superlinear decay  quaranteed for $A$ can be obtained with a sequence of polynomials with roots staying away from the spectrum of $B$.  This can be done for example by taking a nested sequence of compact sets $K_n$  such that $\cap K_n= \sigma(A)$, using Hilbert Lemniscate Theorem  to get polynomials  such that the associated lemniscates include $K_{n+1}$ but stay inside $K_n$.  The related Green's functions shall blow up at $\sigma(B)$.   

\end{proof}

\begin{remark}  In [8] we studied the polynomial acceleration speeds for the equation $ x=Lx+f$ with $L$ a bounded operator in a Banach space $\mathcal X$. 
We formulated the equation in the fixed point form, rather than the usual $Ax=b$,  to make the relationship between fixed point iteration and  e.g. Krylov methods more apparent.   Notice that  viewing $x$ and $f$ as  bounded operators $\mathbb C \rightarrow \mathcal X$, the fixed point equation can be viewed as a very special case of (\ref{original})  with $A=L$ and $B=1$.   The optimal asymptotic  convergence rate is, in agreement with the  results above, 
$$
\eta(A)= e^{- g(1)}
$$
provided $1 \notin \widehat{\sigma(A)}$, see Theorem 3.4.9 in [8]. Here $g$ denotes the Green's function when the capacity is positive and  can be thought as $+\infty$  when the capacity  vanishes.  We also discussed the  superlinear behavior when the capacity vanishes and  modelling the early behavior of iterations by assuming $1\in \partial{\widehat{\sigma(A)}}$ when  the speed is sublinear. 
\end{remark}

%tamapois
%For the  computation  of  $p$  such that (\ref{polcondition}) holds we should expect that a large $\varepsilon$ such that  $\widehat{\Sigma_\varepsilon(A)}
%\cap \Sigma_\varepsilon(B)= \emptyset$ should be helpful.  Likewise,  for (\ref{eripuolin})  we   would like to have a large $\varepsilon$ such that
%$\widehat{\Sigma_\varepsilon(A)}
%\cap \widehat{\Sigma_\varepsilon(B)}= \emptyset$.
%tahan asti

We now  derive a quantitative version of Theorem  1.4. 
 Denote by $S(A,B)$ again the mapping $X \mapsto AX-XB$.  Then the norm of  $S(A,B)^{-1}$  can be used to bound the perturbation sensitivity.  Since 
 $S(A,B)^{-1} = q(A,B) \circ S(p(A),p(B)) ^{-1}$  we have
 \begin{equation}\label{norminkasvu}
 \|S(A,B)^{-1}\| \le \|q(A,B)\| \|S(p(A),p(B)) ^{-1}\|.
 \end{equation}
When separating the operators using a polynomial $p$ the inversion should become easier  but  one would pay the prize of $q(A,B)$ typically having a large norm. However, as $q(A,B)$ is written out explicitly it can be thought of be applied exactly while the inversion   part  - when the dimensions are large or infinite -  would typically be done only approximatively, e.g. by truncating an iteration.  

It is tempting to replace the  separation condition $\widehat{\sigma(A)} \cap \sigma(B)= \emptyset $   by  the corresponding one  on pseudospectra:  
\begin{equation}\label{epsilonseparation}
\widehat{\Sigma_\varepsilon(A)}
\cap \Sigma_\varepsilon(B)= \emptyset,
\end{equation}
in  particular, as one of the the early applications of pseudospectrum was related to measuring the separation between matrices [15], [2].   However,  we shall rather use the following condition
\begin{equation}\label{puolittain}
V_p(A) \cap \Sigma_\varepsilon(B)=\emptyset
\end{equation}
which connects  the  polynomial $p$ directly into the estimates.  In  practice, one could  calculate $\Sigma_\varepsilon(B)$ with moderate $\varepsilon$ and search for  a polynomial $p$ e.g. by running  an Arnoldi type  Krylov process for a while and testing whether (\ref{puolittain}) is satisfied. This, or  even the "ideal Arnodi" method,  may not always  produce polynomials  with level set staying close to the spectrum.  In fact,  already the minimizing $\|p\|_K$  of monic polynomials of odd degree over $K=[-2,-1] \cup [1,2]$ necessarily has a zero at origin, staying far away from $K$.  For that reason the process behind  the proof of Theorem 1.7  is based on minimizing $\|p(A)\|$ over monic polynomials of given degree but includes a "cleaning" process - which most likely would not usually be needed.  Notice also, that if $\widehat{\Sigma_\varepsilon(A)}$ is known  and such that (\ref{epsilonseparation}) holds, then one could compute Fekete points on $\widehat{\Sigma_\varepsilon(A)}$ to get a polynomial for which (\ref{puolittain}) could hold.

Assume now that $\varepsilon$ and $p$ are such that (\ref{puolittain}) holds.  Then there exists $\delta>0$  and a contour $\gamma_B$ surrounding  $\Sigma_\varepsilon (B)$,   having vanishing total winding around $V_p(A)$,  and such  that  along $\gamma_B$  we have  $|p(\mu)| > \|p(A)\| + \delta$.  Let $\ell_B$ be the length of $\gamma_B$. 
Then 
$$
p(B)^{-k} = \frac{1}{2\pi i}\int_{\gamma_B} p(\mu)^{-k} (\mu - B)^{-1} d\mu
$$
which implies
$$
\|p(B)^{-k}\| \le \frac{\ell_B}{2\pi \varepsilon} (\|p(A)\|+\delta)^{-k}
$$
so that
\begin{equation}
\|p(A)^k\| \|p(B)^{-k-1}\|  \le \frac{\ell_B}{2\pi \varepsilon}  \frac{\|p(A)^k\|}{(\|p(A)\|+\delta)^{k+1}}.
\end{equation}
Summing up we have the following.

\begin{proposition} 
Assume that there is a polynomial $p$ and  $\varepsilon>0$  so that (\ref{puolittain}) holds. Then  with  $\delta$, $\ell_B$  as above  we have
\begin{equation}\label{kvantitatiivinen}
\|S(p(A),p(B))^{-1} \| \le  \frac{\ell_B}{2\pi \varepsilon} \sum_{k=0}^\infty \frac{\|p(A)^k\|}{(\|p(A)\|+\delta)^{k+1}}.
\end{equation}
\end{proposition}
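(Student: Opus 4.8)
The plan is simply to combine the per-term estimates already displayed just before the statement with the series representation of $S(p(A),p(B))^{-1}$ coming from Proposition 1.3.

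First I would observe that the hypothesis (\ref{puolittain}) puts us in a position to apply Proposition 1.3 to the pair $(p(A),p(B))$. Indeed $\rho(p(A)) \le \|p(A)\|$, and since $\sigma(B) \subset \Sigma_\varepsilon(B)$ lies interior to the contour $\gamma_B$ on which $|p(\mu)| \ge \|p(A)\| + \delta$, we also get $\rho(p(B)^{-1}) = 1/\min_{\mu\in\sigma(B)}|p(\mu)| \le (\|p(A)\|+\delta)^{-1}$; hence $\rho(p(A))\,\rho(p(B)^{-1}) \le \|p(A)\|/(\|p(A)\|+\delta) < 1$. Therefore Proposition 1.3, with $A$ and $B$ replaced by $p(A)$ and $p(B)$, gives for every $C$ the convergent representation $S(p(A),p(B))^{-1}(C) = -\sum_{k=0}^\infty p(A)^k C\, p(B)^{-k-1}$.

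Next I would take operator norms term by term, using submultiplicativity together with the bound $\|p(B)^{-k-1}\| \le \frac{\ell_B}{2\pi\varepsilon}(\|p(A)\|+\delta)^{-(k+1)}$ established just above the statement (from the Cauchy integral along $\gamma_B$ and the fact that $\|(\mu-B)^{-1}\| < 1/\varepsilon$ on $\gamma_B$). This yields $\|p(A)^k C\, p(B)^{-k-1}\| \le \frac{\ell_B}{2\pi\varepsilon}\,\|C\|\,\|p(A)^k\|\,(\|p(A)\|+\delta)^{-(k+1)}$; summing over $k$, dividing by $\|C\|$, and taking the supremum over $C$ gives the claimed inequality. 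The series on the right converges, since $\|p(A)^k\|^{1/k} \to \rho(p(A)) < \|p(A)\|+\delta$, which in passing also re-confirms the convergence of the operator series above.

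I do not see a genuine obstacle here: the content is entirely contained in Proposition 1.3 plus the Cauchy-integral estimate, and the proof is just the triangle inequality applied to an absolutely convergent series. If anything requires care it is the preliminary existence of a contour $\gamma_B$ with total winding number $0$ about $V_p(A)$ and $1$ about $\Sigma_\varepsilon(B)$ along which $|p| > \|p(A)\|$ — but this is the routine separation of the two disjoint compact sets $V_p(A)$ and $\Sigma_\varepsilon(B)$ by enclosing the latter in an open set whose (smoothed) boundary avoids $V_p(A)$, after which compactness furnishes $\delta>0$; the paper already takes this construction as given.
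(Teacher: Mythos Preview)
Your proposal is correct and follows essentially the same route as the paper: the paper derives the termwise Cauchy-integral bound $\|p(A)^k\|\,\|p(B)^{-k-1}\| \le \frac{\ell_B}{2\pi\varepsilon}\,\|p(A)^k\|\,(\|p(A)\|+\delta)^{-(k+1)}$ just before the statement and then simply ``sums up'' the series representation from Proposition~1.3, exactly as you do. One cosmetic remark: the bound $\rho(p(B)^{-1}) \le (\|p(A)\|+\delta)^{-1}$ does not follow merely from $\sigma(B)$ lying interior to $\gamma_B$ --- but you only need $\rho(p(A))\rho(p(B)^{-1})<1$, and that is immediate from $\sigma(B)\subset\Sigma_\varepsilon(B)$ being disjoint from $V_p(A)$.
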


\begin{remark}If $X=S(A,B)^{-1}(C)$ is wanted within some  tolerance, notice that  (\ref{norminkasvu}) and (\ref{kvantitatiivinen}) allow  one to calculate a safe truncation of the series expansion
$$
Y = \sum_{k=0}^\infty p(A)^k C p(B)^{-k-1}.
$$
In fact,  truncating 
$$
\widetilde Y=\sum_{k=0}^N p(A)^k C p(B)^{-k-1}
$$
and denoting $\widetilde X= q(A,B)(\widetilde Y)$
we  obtain $ \| \widetilde X-X\| < tol$,  providing $N$ is large enough so that
$$
r^{N+1} <  \frac{2\pi \varepsilon (1-r)}{{\ell_B} \|q(A,B)\|}   \ tol
$$
holds, where $r= \|p(A)\| / (\|p(A)\| + \delta)$.
\end{remark}

%NELOSLUKU

 \section {Half plane separation} 

The Theorem 1.6 deals with the question of existence of $p$ such that  
  the spectra are separated into different half planes, allowing one to solve the modified equation using  the integral representation (\ref{heinzintegral}) or the {\it sign-}function.

 Observe that
 \begin{equation}\label{triang}
 M= \begin{pmatrix} A&C\\
&B\end{pmatrix} =\begin{pmatrix} I&-X\\
&I\end{pmatrix}\begin{pmatrix} A\\
&B\end{pmatrix}\begin{pmatrix} I&X\\
&I\end{pmatrix}
\end{equation}
is satisfied exactly when $AX-XB=C$.   
If $\sigma(A) \subset \mathbb C_{+}$ and $\sigma(B)\subset \mathbb C_{-}$,    the {\it sign}-function is well defined at $M$ and we have 
\begin{equation}\label{signinkautta}
{\rm sgn}\begin{pmatrix} A&C\\
&B\end{pmatrix} =\begin{pmatrix} I&-X\\
&I\end{pmatrix}\begin{pmatrix} I \\
&-I\end{pmatrix}\begin{pmatrix} I&X\\
&I\end{pmatrix}=\begin{pmatrix} I&2X\\
&-I\end{pmatrix}.
\end{equation}
Thus,  $X$ can be obtained if  sgn$(M)$ can be computed.  This  is a rather popular route to compute the solution to Sylverster equation, see e.g. [1],  [6].  
   %johonkin tama [Ka-Kr]
 
We first prove the qualitative result of Theorem 1.6,  then discuss how the lemniscate set $V_p(A\oplus B)$ can be used  to obtain a quantitative result.

  %SIGN FUNKTIO TAHAN?? 

{\it Proof of Theorem 1.6}.   
The condition
$
\widehat{\sigma(A)} \cap \widehat{\sigma(B)} =\emptyset
$
is necessary.  In fact, assuming  (\ref{eripuolin}) holds, then we also have $\widehat{\sigma(p(A))} \subset\mathbb C_{+} $  { and }  $\widehat{\sigma(p(B))} \subset \mathbb C_{-}$ and hence
$$
\widehat{\sigma(p(A))} \cap \widehat{\sigma(p(B))}=\emptyset.
$$  
If $\lambda_0 \in \widehat{\sigma(A)} \cap \widehat{\sigma(B)}$  we get a contradiction as
$$
p(\lambda_0)  \in \widehat{\sigma(p(A))} \cap \widehat{\sigma(p(B))}. 
$$
 Here the last step follows from the general fact that if $z\in \widehat K$ and q is any polynomial, then  $|(q\circ p)(z)| \le \|q\circ p \|_K = \|q\|_{p(K)}$ and so, $p(z) \in \widehat{p(K)}$.

Assume therefore that (\ref{wideintersection}) holds and denote dist$(\sigma(A) , \sigma(B))= \delta$.  Put $U_1=  \{ \lambda \ :  { \rm dist}(\lambda, \widehat{\sigma(A)}) < \delta/3\}$ and $U_2=  \{ \mu \ :  { \rm dist}(\mu, \widehat{(\sigma(B)}) < \delta/3\}$.  Then denote  by $K$ the union of the closures of $U_1$ and $U_2$. 
Recall that $A(K)$ stands for  continuous functions in $K$ which are holomorphic in the interior of $K$.  Denote $c=$max$\{\|A\|,\|B\|\} +1$.  Then we define a function $\varphi \in A(K)$ as follows
\begin{equation}\label{holosep}
\varphi:   \overline U_1\ni z \mapsto z+c, \  \text{ while }    \overline U_2 \ni z  \mapsto z-c.
\end{equation} 
Since $\mathbb C \setminus K$ is connected we may by Mergelyan's Theorem approximate $\varphi$ arbitrarily accurately on $K$ by polynomials, say $\| \varphi - p\|_K< \varepsilon$. If $\gamma_1$ is a contour such that $\gamma_1$ surrounds $\widehat{\sigma(A)}$ inside $U_1$, then 
we have
$$
\|\varphi(A) -p(A)\| \le  \frac{\varepsilon}{2\pi} \int_{\gamma_1} \| (\lambda -A)^{-1} \| \ |d\lambda| 
$$
and in particular if $\varepsilon$ is small enough, $\sigma(p(A)) \subset \mathbb C_{+}.$  Defining  $\gamma_2$ in the similar way and integrating we get $p(B)$ with  spectrum in the left half plane.
 $\hfill\Box$
 \bigskip
 
We may replace the Mergelyan's Theorem in the proof of Theorem 1.6 by the use of multicentric representation of $\varphi$.
To that end, assume we have found polynomials $p_1$, $p_2$ such that
 \begin{equation}\label{erottelu}
 V_{p_1}(A) \cap V_{p_2}(B)= \emptyset,
 \end{equation}
 e.g. based on Theorem 1.7.  
Let then  $U_i$ be open, $ V_{p_1}(A)  \subset U_1$ and $ V_{p_2}(B)  \subset U_2$  and such that $\overline U_1\cap \overline U_2= \emptyset$.
 Then, again  by Theorem 1.7, we may assume that, applied to the block diagonal operator $A\oplus B$,  we have 
a polynomial $p$ such that 
\begin{equation}\label{algoritmilla}
V_p(A\oplus B) \subset U_1 \cup U_2.
\end{equation}
Without loss of generality we may assume that $p$ is of degree $d$ and has simple roots $\lambda_j$.
Let $t>0$ be small enough  so that   
$$
\gamma= \{\lambda \ : \  |p(\lambda)| = \|p(AÊ\oplus B)\| + t\} \subset U_1\cup U_2.
$$
Define $\varphi$ on  $\overline U_1\cup \overline U_2$ as in (\ref{holosep}).    We now use the multicentric representation (\ref{multi}) of $\varphi$ to  approximate $\varphi(A)$ and $\varphi(B)$ by  polynomials.  When $|w| < |p(\lambda)|$ we have
$$
 K_j(\lambda, w)=\frac{1}{\lambda-\lambda_j}   \sum_{n=0}^\infty w^n p(\lambda)^{-n}
 $$
and the functions $f_j$ in 
$$
\varphi(z)= \sum_{j=1}^d \delta_j(z) f_j(p(z))
$$
satisfy
$$
f_j(p(z)) = \frac{1}{2\pi i} \int_{\gamma}  K_j(\lambda,p(z))  \varphi(\lambda) d\lambda,
$$  
see [10]. We put
\begin{equation}\label{polkatko}
P(z)=\sum_{i=1}^d \delta_i(z)P_i(p(z))
\end{equation}
where we  truncate the series expansion for the integral kernel after the index $N$
$$
P_j(w)= \frac{1}{2\pi i} \int_\gamma \frac{\varphi(\lambda)}{\lambda-\lambda_j}   \sum_{n=0}^N w^n p(\lambda)^{-n} d\lambda,
$$
so that in particular $P$ is a polynomial of degree $(N+1)d -1$ at most.

Let $\gamma = \gamma_1 \cup \gamma_2$  with $\gamma_i \subset U_i$.     The roots of $p$ are divided into two parts, say $\lambda_j \in U_1$ for $j\le m$ and $\lambda_k \in U_2$ for $m<k \le d$.  Since $\sigma(A) \subset U_1$,  the integral over $\gamma_2$ does not contribute into $\varphi(A)$ and we may estimate as follows. Denote
$$
C_j =    \frac{1}{2\pi} \int_{\gamma} \frac{1}{|\lambda-\lambda_j| } |d\lambda|.
$$ 
Now
$$
P(A)= \sum_{j=1}^m \delta_j(A) \frac{1}{2\pi i} \int_{\gamma} \frac{\varphi(\lambda)}{\lambda-\lambda_j}   \sum_{n=0}^N p(A)^n p(\lambda)^{-n}  d\lambda
$$
and thus
$$
\| \varphi(A) -P(A) \| \le \|\varphi\|_\gamma \ \sum_{j=1}^m C_j \  \|\delta_j(A)\| \   \frac{1}{1-r } r^{N+1},
$$
where we   set $r=\frac{\|p(A)\|}{\|p(A\oplus B)\|+t}$.  Likewise  we obtain   
$$
\| \varphi(B) -P(B) \| \le \|\varphi\|_\gamma \  \sum_{k=m+1}^d C_k \  \|\delta_k(B))\| \   \frac{1}{1-s } s^{N+1},
$$
with $s=\frac{\|p(B)\|}{\|p(A\oplus B)\|+t}$.  By  the choice of $\varphi$  the spectrum of $\varphi(A)$ is in the half plane $Re\  \lambda >1$ while that of $\varphi(B)$ is likewise in the half plane $Re\  \mu < -1$.  Choosing $N$ large enough so that 
$$
\max \{ \| \varphi(A) -P(A) \|,\| \varphi(B) -P(B) \| \} <1
$$
 we have $\sigma(P(A)) \subset  \mathbb C_{+}$ and $\sigma(P(B)) \subset \mathbb C_{-}$.M
 
 To summarize:
 \begin{proposition}
Assume that we have  a polynomial $p$ such that (\ref{algoritmilla}) holds.   Then we can  estimate a truncation index $N$ such that  
\begin{equation}
\sigma(P(A)) \subset\mathbb C_{+}  \text { and } \ \sigma(P(B)) \subset \mathbb C_{-}
\end{equation}
holds with  the  polynomial $P$ in (\ref{polkatko}).  
\end{proposition}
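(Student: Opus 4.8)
The plan is to make quantitative the construction carried out in the paragraph preceding the statement; the only genuinely new ingredient is a bound on how large $N$ must be. First I would pin down the contour. Since $p(A\oplus B)=p(A)\oplus p(B)$ we have $\|p(A)\|\le\|p(A\oplus B)\|$ and $\|p(B)\|\le\|p(A\oplus B)\|$, and the sublevel sets $\{\lambda:|p(\lambda)|\le\|p(A\oplus B)\|+t\}$ decrease to $V_p(A\oplus B)$ as $t\downarrow 0$; so by (\ref{algoritmilla}) and compactness there is a non-critical $t>0$ for which the lemniscate $\gamma=\{\lambda:|p(\lambda)|=\|p(A\oplus B)\|+t\}$ is a union of Jordan curves contained in $U_1\cup U_2$. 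Write $\gamma=\gamma_1\cup\gamma_2$ with $\gamma_i\subset U_i$, and split the simple roots of $p$ accordingly, $\lambda_j\in U_1$ for $j\le m$ and $\lambda_k\in U_2$ for $k>m$. Then $r=\|p(A)\|/(\|p(A\oplus B)\|+t)$ and $s=\|p(B)\|/(\|p(A\oplus B)\|+t)$ both lie in $[0,1)$, which is exactly what makes the truncated multicentric series converge geometrically.

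Next I would record the spectral bracketing of the target function. With $c=\max\{\|A\|,\|B\|\}+1$ and $\varphi$ defined by (\ref{holosep}), the holomorphic functional calculus and spectral mapping give $\sigma(\varphi(A))=\sigma(A)+c\subset\{\mathrm{Re}\,\lambda\ge 1\}$ and $\sigma(\varphi(B))=\sigma(B)-c\subset\{\mathrm{Re}\,\mu\le -1\}$, since $|z|\le\|A\|$ on $\sigma(A)$ and $|z|\le\|B\|$ on $\sigma(B)$. Consequently
\[
M_A=\sup_{\mathrm{Re}\,\mu\le 0}\|(\mu-\varphi(A))^{-1}\|<\infty,\qquad
M_B=\sup_{\mathrm{Re}\,\mu\ge 0}\|(\mu-\varphi(B))^{-1}\|<\infty,
\]
each being a resolvent norm that is continuous on a closed half-plane and tends to $0$ at $\infty$.

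Then I would bring in the multicentric expansion of $\varphi$ from [10] with nodes $\lambda_j$. On $\gamma$ one has $|p(\lambda)|=\|p(A\oplus B)\|+t>\|p(A)\|$, so the kernel $K_j(\lambda,w)=(\lambda-\lambda_j)^{-1}\sum_{n\ge0}w^n p(\lambda)^{-n}$ is summable with $w$ replaced by $p(A)$ (and by $p(B)$), and truncating at $n=N$ yields the polynomial $P$ of (\ref{polkatko}). Using that $\sigma(A)\subset U_1$ — so, as in the text, the arc $\gamma_2$ and the nodes in $U_2$ do not enter $\varphi(A)$ — the tail of the geometric series gives, with $C_j=\frac1{2\pi}\int_\gamma|\lambda-\lambda_j|^{-1}|d\lambda|$,
\[
\|\varphi(A)-P(A)\|\le\|\varphi\|_\gamma\Big(\sum_{j\le m}C_j\,\|\delta_j(A)\|\Big)\frac{r^{N+1}}{1-r},
\]
and symmetrically $\|\varphi(B)-P(B)\|\le\|\varphi\|_\gamma\big(\sum_{k>m}C_k\|\delta_k(B)\|\big)s^{N+1}/(1-s)$. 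Writing $P(A)=\varphi(A)+E_A$, for every $\mu$ with $\mathrm{Re}\,\mu\le0$ we factor $\mu-P(A)=(\mu-\varphi(A))\big(I-(\mu-\varphi(A))^{-1}E_A\big)$, which is invertible once $\|E_A\|<1/M_A$; hence $\sigma(P(A))\subset\mathbb C_{+}$ as soon as $N$ satisfies $\|\varphi(A)-P(A)\|<1/M_A$, and the analogous estimate with $M_B$ forces $\sigma(P(B))\subset\mathbb C_{-}$. Since $r,s<1$, both thresholds hold for all large $N$, and the smallest admissible $N$ is estimable from $r$, $s$, $\|\varphi\|_\gamma$, the $C_j$, the $\|\delta_j(A)\|$ and $\|\delta_k(B)\|$, and $M_A$, $M_B$.

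I expect the main obstacle to be precisely this last passage: mere smallness of $\|\varphi(A)-P(A)\|$ below a fixed absolute constant does not by itself push the spectrum into a half-plane, because $\|(\mu-\varphi(A))^{-1}\|$ need not be bounded by $1$ on $\{\mathrm{Re}\,\mu\le0\}$ for a non-normal $A$; what is really needed is the uniform resolvent bound $M_A$ (equivalently, that the pseudospectra of $\varphi(A)$ stay off the imaginary axis at a definite level), and likewise $M_B$. The remaining steps — choosing $\gamma$, the spectral mapping for $\varphi$, and the geometric tail bound — are routine, the last two already being spelled out in the text.
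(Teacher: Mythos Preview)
Your argument is correct and follows the paper's approach almost verbatim: same contour $\gamma$, same $r,s<1$, same geometric tail estimates, same perturbation idea $\mu-P(A)=(\mu-\varphi(A))(I-(\mu-\varphi(A))^{-1}E_A)$. The one place you diverge is the final threshold: the paper simply requires $\max\{\|\varphi(A)-P(A)\|,\|\varphi(B)-P(B)\|\}<1$, whereas you introduce $M_A,M_B$ and require $\|E_A\|<1/M_A$, $\|E_B\|<1/M_B$, and you flag the passage from ``$<1$'' to a half-plane inclusion as the main obstacle for non-normal $A$.

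That worry is unfounded here, because the constant $c=\max\{\|A\|,\|B\|\}+1$ was chosen precisely so that $M_A\le 1$ and $M_B\le 1$. Indeed $\varphi(A)=A+cI$, and for $\mathrm{Re}\,\mu\le 0$ one has $|\mu-c|\ge c\ge \|A\|+1>\|A\|$, so the Neumann series gives
\[
\|(\mu-\varphi(A))^{-1}\|=\|((\mu-c)I-A)^{-1}\|\le \frac{1}{|\mu-c|-\|A\|}\le \frac{1}{c-\|A\|}\le 1,
\]
and similarly for $B$. Thus the paper's fixed threshold $1$ is legitimate and your constants $M_A,M_B$ are automatically $\le 1$; no separate pseudospectral estimate is needed. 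Apart from this (harmless) extra caution, your proof coincides with the paper's.
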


%vaihdetaan sektiota

\section{Explicit series expansion using multicentric calculus}

In the previous section we  demonstrated the existence  polynomials for half plane separation.  One could then  compute the {\it sign}-function of 
\begin{equation}\label{emm}
M= \begin{pmatrix} A&C\\&B\end{pmatrix}
\end{equation}  and obtain the solution $X$ to the Sylvester equation from (\ref{signinkautta}).   This can be  done  for example using Newton's iteration.   We shall here {\it bypass } the mapping into different half planes. We  use piecewise holomorphic functions  to define the  formal solution as a Cauchy-integral and then show how  using multicentric calculus  we get an explicit  series expression for it.  In the following we again assume all the time that $A\in \mathcal B(\mathcal X)$, $B\in \mathcal B(\mathcal Y)$ and $C\in \mathcal B(\mathcal Y, \mathcal X)$

Suppose we have open sets $U_1, U_2$ such that $\widehat{\sigma(A)} \subset U_1$ and $\widehat{\sigma(B)} \subset U_2$  and  $U_1\cap U_2= \emptyset$.  Let $\varphi$ be the locally constant holomorphic function taking value 1 in $U_1$ and value $-1$ in $U_2$.  If $\gamma_1$ is a contour inside $U_1$ surrounding $\sigma(A)$ we set
\begin{equation}\label{melkein}
Q= \frac{1}{2\pi i}\int_{\gamma_1} (\lambda-M)^{-1}.
\end{equation}
Then the following holds.
\begin{proposition} In the notation above  
\begin{equation}\label{ratkaisuun}
Q= \begin{pmatrix} I&X\\&0\end{pmatrix}
\end{equation}
where $X$ is the solution of $AX-XB=C$.

\end{proposition}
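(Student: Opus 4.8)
\emph{Proof proposal.} The plan is to read $Q$ off the block-triangular factorization (\ref{triang}) together with the holomorphic functional calculus for $M$. First I would record the two consequences of the hypotheses that are needed. Since $\sigma(A)\subset\widehat{\sigma(A)}\subset U_1$ and $\sigma(B)\subset\widehat{\sigma(B)}\subset U_2$ with $U_1\cap U_2=\emptyset$, we have $\sigma(A)\cap\sigma(B)=\emptyset$, so by the Sylvester--Rosenblum theorem there is a unique $X\in\mathcal B(\mathcal Y,\mathcal X)$ with $AX-XB=C$; and by (\ref{triang}) this $X$ gives $M=S^{-1}DS$, where $S=\begin{pmatrix}I&X\\&I\end{pmatrix}$, $S^{-1}=\begin{pmatrix}I&-X\\&I\end{pmatrix}$ and $D=\begin{pmatrix}A&\\&B\end{pmatrix}$.

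Next I would conjugate the resolvent. For $\lambda$ in the resolvent set of $D$ one has $(\lambda-M)^{-1}=S^{-1}(\lambda-D)^{-1}S=S^{-1}\begin{pmatrix}(\lambda-A)^{-1}&\\&(\lambda-B)^{-1}\end{pmatrix}S$; since $\gamma_1\subset U_1$ surrounds (hence avoids) $\sigma(A)$ and is disjoint from $U_2\supset\sigma(B)$, this identity is valid along $\gamma_1$. Pulling the constant matrices $S^{\pm1}$ out of the integral in (\ref{melkein}) yields $Q=S^{-1}\begin{pmatrix}P_A&\\&P_B\end{pmatrix}S$, where $P_A=\frac{1}{2\pi i}\int_{\gamma_1}(\lambda-A)^{-1}\,d\lambda$ and $P_B=\frac{1}{2\pi i}\int_{\gamma_1}(\lambda-B)^{-1}\,d\lambda$ are the Riesz projections cut out by $\gamma_1$.

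It then remains to evaluate the two projections. Because $\gamma_1$ lies in $U_1$ and has total winding number $1$ around $\sigma(A)$, it encloses all of $\sigma(A)$, so $P_A$ is the functional calculus applied to the constant function $1$, i.e. $P_A=I_{\mathcal X}$. Because $\gamma_1\subset U_1$ while $\sigma(B)\subset U_2$ and $U_1\cap U_2=\emptyset$, the winding number of $\gamma_1$ about every point of $\sigma(B)$ is $0$, so $P_B=0$. Here I would justify that such a $\gamma_1$ is available by using that $\widehat{\sigma(A)}$ is polynomially convex and contained in the open set $U_1$: by the Hilbert Lemniscate Theorem we may take $\gamma_1=\partial D$ for a lemniscate domain $D$ with $\widehat{\sigma(A)}\subset D\subset\overline D\subset U_1$ whose complement is connected, so that $\sigma(B)\subset U_2\subset\mathbb C\setminus\overline D$ lies in the unbounded component of $\mathbb C\setminus\gamma_1$. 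Substituting $P_A=I$, $P_B=0$ and multiplying out, $Q=S^{-1}\begin{pmatrix}I&\\&0\end{pmatrix}S=\begin{pmatrix}I&-X\\&I\end{pmatrix}\begin{pmatrix}I&\\&0\end{pmatrix}\begin{pmatrix}I&X\\&I\end{pmatrix}=\begin{pmatrix}I&X\\&0\end{pmatrix}$, which is (\ref{ratkaisuun}).

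The only genuinely delicate step is the last winding-number claim: checking that a contour ``inside $U_1$ surrounding $\sigma(A)$'' really does have winding number $0$ around $\sigma(B)$, i.e. that $\sigma(A)$ can be enclosed and $\sigma(B)$ excluded simultaneously while remaining in $U_1$. This is exactly where polynomial convexity of $\widehat{\sigma(A)}$ is used; everything else is the routine conjugation-plus-Riesz-projection computation.
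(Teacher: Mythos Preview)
Your argument is correct and follows essentially the same route as the paper: conjugate the resolvent by the similarity $S=\begin{pmatrix}I&X\\&I\end{pmatrix}$, integrate the diagonal resolvent over $\gamma_1$ to obtain $\begin{pmatrix}I&\\&0\end{pmatrix}$, and multiply out. Your additional care about the winding-number hypothesis and the existence of a suitable $\gamma_1$ is not in the paper's proof (which simply takes these as part of the setup), but it is harmless and correct.
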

\begin{proof}
From (\ref{emm}) and 
 
 \begin{equation}
 M =\begin{pmatrix} I&-X\\
&I\end{pmatrix}\begin{pmatrix} A\\
&B\end{pmatrix}\begin{pmatrix} I&X\\
&I\end{pmatrix}
\end{equation}
 we have
 \begin{align}
 Q= & \begin{pmatrix} I&-X\\
&I\end{pmatrix}\frac{1}{2\pi i}\int_{\gamma_1} \begin{pmatrix} \lambda-A&\\ & \lambda-B\end{pmatrix}^{-1}\begin{pmatrix} I&X\\
&I\end{pmatrix}\\
=  &\begin{pmatrix} I&-X\\
&I\end{pmatrix}\begin{pmatrix}I&0 \\
0&0\end{pmatrix}\begin{pmatrix} I&X\\
&I\end{pmatrix}=\begin{pmatrix} I&X\\&0\end{pmatrix}.
\end{align}
\end{proof}

Our aim is now to compute $Q$. To that end let $\gamma= \gamma_1 \cup \gamma_2$ where $\gamma_2$ is a contour surrounding $\sigma(B)$ inside $U_2$ so that, as $\gamma$ surrounds $\sigma(M)$, we have
$$
I = \frac{1}{2\pi i}\int_{\gamma} (\lambda-M)^{-1}.
$$ 
But then  adding this to  both sides of 
$$
\varphi(M)= Q - \frac{1}{2\pi i}\int_{\gamma_2} (\lambda-M)^{-1}
$$
yields 
$
\varphi(M) = 2Q-I
$
and $Q=\frac{1}{2} ( \varphi(M) +I).$     Suppose  we have a polynomial $p$  such that
\begin{equation}\label{oletaemmalta}
V_p(M) \subset U_1 \cup U_2
\end{equation}
and  $t>0$ small enough so that   $\gamma = \{\lambda \ : |p(\lambda)| = \|p(M)\| +t \} \subset U_1 \cup U_2$. Then $\gamma$ splitts into  $\gamma_1$ and $\gamma_2$ in a natural way.  We now write down the series expansion of $\varphi$ which converge  inside $\gamma$, uniformly in  compact subsets.

On the polynomial $p$ we assume that it has simple roots and is monic and of degree $d$.    We write $\varphi$ in the multicentric form
\begin{equation}\label{kohtavalmis}
\varphi(\lambda) = \sum_{j=1}^d \delta_j(\lambda) f_j(p(\lambda))
\end{equation}  where the Taylor coefficients $\alpha_{j,k}$ in
$$
f_j(w) = \sum_{k=0}^\infty \alpha_{j,k} w^k
$$
can be computed by an explicit recursion.  The recursion is derived in [10].   Let  $p$ have roots $\lambda_j$ and $\delta_j(\lambda)$ denote the polynomials taking value 1 at $\lambda_j$ and vanishing at the other roots.  We may assume that $\lambda_j \in U_1$ for $j\le s$ and $\lambda_j\in U_2$ for $s+1\le j \le d$.  We  first compute recursively polynomials $b_{n,m}$ as follows:

Put $b_{0,0}=1$,  $b_{1,1}=p'$ , $b_{n,0} =0$ for $n>0$  and  for $m>n$ $b_{n,m}=0$.   Then 
 $$ 
 b_{n+1,m}= b_{n, m-1}  p' + b'_{n,m}.
$$
Then given the   values $\varphi^{(n)}(\lambda_j)$ we can compute  $f_j^{(n)}(0)$ from the following  
 \begin{align}
 (p'(\lambda_j))^n f_j^{(n)}(0)&=  
 \varphi^{(n)} (\lambda_j) \\
 &-\sum_{k=1}^d \sum_{m=0}^{n-1}  {n\choose m} \delta_k^{(n-m)}(\lambda_j)\sum_{l=0}^m b_{m, l}(\lambda_j)
f_k^{(l)} (0) \\ 
&-  \sum_{l=0}^{n-1} b_{n,l}(\lambda_j) f_j^{(l)}(0).
\end{align}
This is Proposition 4.3 in [10]\footnote{where the last line (5.11) had  dropped out}.  We can summarize:

\begin{proposition} Let $\varphi=1 $  in $U_1$ and  $\varphi=-1$ in $U_2$, and assume  $p$ is such that (\ref{oletaemmalta}) holds.  Then we have $Q= \frac{1}{2} (\varphi(M)+I)$ where
$$
\varphi(M) = \sum_{j=1}^d \delta_j(M) \sum_{n=0}^\infty \frac{f_j^{(n)}(0)}{n !} p(M)^n.
$$ 
\end{proposition}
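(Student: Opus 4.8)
The plan is to assemble Proposition 5.3 from pieces that are, at this point, essentially already in hand. The starting point is the identity $Q = \tfrac12(\varphi(M)+I)$, which was derived just above the statement by adding $I = \tfrac1{2\pi i}\int_\gamma(\lambda-M)^{-1}$ to $\varphi(M) = Q - \tfrac1{2\pi i}\int_{\gamma_2}(\lambda-M)^{-1}$; I would simply recall this. The real content is the claimed series representation of $\varphi(M)$, so the bulk of the argument is to justify that the multicentric expansion $\varphi(\lambda) = \sum_{j=1}^d \delta_j(\lambda) f_j(p(\lambda))$, with $f_j(w) = \sum_{n\ge 0} \tfrac{f_j^{(n)}(0)}{n!} w^n$, may be applied with $M$ substituted for $\lambda$ and yields a norm-convergent series.

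First I would set up the functional calculus justification. Because $\varphi$ is locally constant (value $1$ on $U_1 \supset \widehat{\sigma(A)}$, value $-1$ on $U_2 \supset \widehat{\sigma(B)}$), it is holomorphic on a neighborhood of $\sigma(M) = \sigma(A)\cup\sigma(B)$, so $\varphi(M)$ is well defined by the Riesz--Dunford calculus, and in fact $\varphi(M) = 2Q - I$ as above. The multicentric identity $\varphi = \sum_j \delta_j \cdot (f_j\circ p)$ holds as an identity of holomorphic functions on the region bounded by $\gamma$ (this is the construction of [10], valid once $p$ has simple roots and $\varphi$ is holomorphic inside $\gamma$); applying the holomorphic functional calculus, which is an algebra homomorphism, gives $\varphi(M) = \sum_{j=1}^d \delta_j(M)\, (f_j\circ p)(M)$, and $(f_j\circ p)(M) = f_j(p(M))$ by the composition rule for the calculus. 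It remains to expand $f_j(p(M))$ as a power series in $p(M)$. For this I would invoke the hypothesis (\ref{oletaemmalta}): since $V_p(M)\subset U_1\cup U_2$, one can pick $t>0$ with $\gamma = \{|p(\lambda)| = \|p(M)\|+t\}\subset U_1\cup U_2$, and then $f_j$, being the function whose Taylor coefficients at $0$ are the $\alpha_{j,n} = f_j^{(n)}(0)/n!$, is holomorphic on the disc $\{|w| < \|p(M)\|+t\}$ (its singularities sit on $p(\gamma)$ at radius $\|p(M)\|+t$ or beyond); since the spectral radius of $p(M)$ is at most $\|p(M)\| < \|p(M)\|+t$, the Taylor series $\sum_n \alpha_{j,n}\, p(M)^n$ converges in operator norm to $f_j(p(M))$. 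Summing over the finitely many $j$ and inserting $Q=\tfrac12(\varphi(M)+I)$ completes the proof.

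I would then note, as a closing remark rather than part of the proof proper, that the Taylor coefficients $\alpha_{j,n} = f_j^{(n)}(0)/n!$ are exactly the quantities produced by the explicit recursion stated just before the proposition: the polynomials $b_{n,m}$ are generated by $b_{0,0}=1$, $b_{1,1}=p'$, $b_{n,0}=0$ for $n>0$, $b_{n,m}=0$ for $m>n$, and $b_{n+1,m} = b_{n,m-1}p' + b_{n,m}'$, and then the displayed identity for $(p'(\lambda_j))^n f_j^{(n)}(0)$ recovers the derivatives $f_j^{(n)}(0)$ from the derivatives $\varphi^{(n)}(\lambda_j)$ (Proposition 4.3 of [10]). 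Here $\varphi^{(n)}(\lambda_j) = 1$ if $\lambda_j\in U_1$ and $n=0$, $\varphi^{(n)}(\lambda_j) = -1$ if $\lambda_j\in U_2$ and $n=0$, and $\varphi^{(n)}(\lambda_j)=0$ for all $n\ge 1$, so the recursion is fully determined. No genuine computation is required for the proposition itself once this is cited.

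The main obstacle, and the only place that needs care, is the convergence claim for the series $\sum_n \alpha_{j,n}\, p(M)^n$: one must verify that the radius of convergence of the scalar Taylor series of $f_j$ at $0$ strictly exceeds the spectral radius of $p(M)$. This follows because $f_j\circ p$ is holomorphic inside $\gamma$ while $\delta_j$ is a polynomial, so $f_j$ itself is holomorphic on the image under $p$ of the interior of $\gamma$, which contains the open disc of radius $\|p(M)\|+t$ about the origin (this is precisely the role of choosing $\gamma$ as a level set of $|p|$ with level $\|p(M)\|+t$); and $\rho(p(M)) \le \|p(M)\| < \|p(M)\|+t$. Everything else is bookkeeping with the functional calculus homomorphism and the already-established identity $Q = \tfrac12(\varphi(M)+I)$.
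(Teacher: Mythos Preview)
Your proposal is correct and follows essentially the same route as the paper. In the paper the proposition is explicitly announced as a summary (``We can summarize:'') of the discussion that precedes it: the identity $Q=\tfrac12(\varphi(M)+I)$ is derived exactly as you recall, the multicentric representation $\varphi(\lambda)=\sum_j\delta_j(\lambda)f_j(p(\lambda))$ with $f_j$ holomorphic on $\{|w|<\|p(M)\|+t\}$ is taken from [10], and the series in $p(M)$ is obtained by substituting. Your write-up is simply more explicit about the functional-calculus bookkeeping and the convergence of $\sum_n\alpha_{j,n}\,p(M)^n$; the paper leaves the latter implicit here and only makes it quantitative afterwards via the integral formula (\ref{fiinkaava}) and the estimate (\ref{fiinkatkaisu}). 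One small stylistic point: your inference ``$f_j\circ p$ holomorphic inside $\gamma$, hence $f_j$ holomorphic on the $p$-image'' is not a self-contained implication; the clean justification is the integral representation $f_j(w)=\tfrac1{2\pi i}\int_\gamma \frac{\varphi(\lambda)}{\lambda-\lambda_j}\,\frac{d\lambda}{1-w/p(\lambda)}$, which is manifestly holomorphic for $|w|<\|p(M)\|+t$. But your conclusion is correct and the argument is the paper's.
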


The Taylor coefficients of $f_j$ satisfy, see Proposition 4.4 in [10],
\begin{equation}\label{fiinkaava}
\alpha_{j,n} =  \frac{f_j^{(n)}(0)}{n !}= \frac{1}{2\pi i} \int_\gamma \frac{\varphi(\lambda)}{p(\lambda)^n}\frac{d\lambda}{\lambda-\lambda_j}.
\end{equation}
Denote $L_j= \frac{1}{2\pi} \int_\gamma \frac{|d\lambda|}{|\lambda-\lambda_j|}$
then, we have
$
| \frac{f_j^{(n)}(0)}{n !}| \le L_j (\|p(M)\|+t)^{-n},
$
which allows us to truncate the series.  Put
$$
\widetilde{\varphi}(M) =\varphi(M) = \sum_{j=1}^d \delta_j(M) \sum_{n=0}^N \frac{f_j^{(n)}(0)}{n !} p(M)^n
$$
so that
\begin{equation}\label{fiinkatkaisu}
\|\widetilde{\varphi}(M) - \varphi(M)\| \le  \frac{C}{1-r}  r^{N+1}
\end{equation}
where
$$
C= \sum_{j=1}^d L_j \|\delta_j(M)\|,  \  \text{ and }  \ r=\frac{\|p(M)\|}{\|p(M)\|+t}.
$$
Let $tol>0$ be given and  compute $N$ such that
\begin{equation}\label{toler}
r^{N+1} < \frac{2(1-r)}{C} tol.
\end{equation}
 
\begin{proposition}
In the notation above, if $N$ is large enough so that (\ref{toler}) holds, then we  have an approximation $\widetilde X$ to $X$ solving $AX-XB=C$  such that $\|\widetilde X-X\| < tol$,  where $\widetilde X$ is the right upper corner element of $\widetilde Q= \frac{1}{2}(\widetilde {\varphi}(M) +I)$.
\end{proposition}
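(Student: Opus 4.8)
The plan is to chain three facts that are already in place: the exact block form of $Q$, the identity $Q=\tfrac12(\varphi(M)+I)$, and the truncation bound (\ref{fiinkatkaisu}). I would begin by recalling from (\ref{ratkaisuun}) that $Q=\begin{pmatrix}I&X\\&0\end{pmatrix}$ with $X$ the solution of $AX-XB=C$, and, as established above, that $Q=\tfrac12(\varphi(M)+I)$. Since $\widetilde Q=\tfrac12(\widetilde\varphi(M)+I)$ by definition, subtracting gives $\widetilde Q-Q=\tfrac12\bigl(\widetilde\varphi(M)-\varphi(M)\bigr)$, hence $\|\widetilde Q-Q\|=\tfrac12\|\widetilde\varphi(M)-\varphi(M)\|$.

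Next I would insert the estimate (\ref{fiinkatkaisu}), namely $\|\widetilde\varphi(M)-\varphi(M)\|\le \frac{C}{1-r}\,r^{N+1}$, with $C=\sum_{j=1}^d L_j\|\delta_j(M)\|$ and $r=\|p(M)\|/(\|p(M)\|+t)<1$; this was obtained from the geometric tail bound $|f_j^{(n)}(0)/n!|\le L_j(\|p(M)\|+t)^{-n}$ coming from (\ref{fiinkaava}) applied to the multicentric expansion of $\varphi(M)$. Combined with the choice of $N$ prescribed by (\ref{toler}), i.e. $r^{N+1}<\frac{2(1-r)}{C}\,tol$, this yields
\begin{equation*}
\|\widetilde Q-Q\|\le \tfrac12\cdot\frac{C}{1-r}\,r^{N+1}<\tfrac12\cdot\frac{C}{1-r}\cdot\frac{2(1-r)}{C}\,tol = tol .
\end{equation*}

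Finally I would pass from the block operator to its corner. The right upper corner of $\widetilde Q-Q$, an element of $\mathcal B(\mathcal Y,\mathcal X)$, is precisely $\widetilde X-X$; writing it as the composition of the (contractive) inclusion $\mathcal Y\hookrightarrow\mathcal X\oplus\mathcal Y$ followed by $\widetilde Q-Q$ and then the (contractive) projection onto $\mathcal X$, one gets $\|\widetilde X-X\|\le\|\widetilde Q-Q\|<tol$, which is the assertion.

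Analytically there is essentially nothing left to do, since all the estimation was carried out already in deriving (\ref{fiinkatkaisu}); the only points needing care are bookkeeping ones --- making sure the contour $\gamma=\gamma_1\cup\gamma_2$, the constants $L_j$, $C$, the ratio $r$ and the index $N$ from (\ref{toler}) are all tied to the \emph{same} polynomial $p$ and parameter $t$ for which (\ref{oletaemmalta}) holds, so that $\widetilde\varphi(M)$ is indeed the object being estimated, and observing that the block-extraction inequality $\|T_{12}\|\le\|T\|$ holds for the norm used on $\mathcal X\oplus\mathcal Y$ (the max norm, under which the coordinate inclusion and projection are contractions).
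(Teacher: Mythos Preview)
Your argument is correct and is exactly the reasoning the paper intends: the proposition is stated there without proof because it is an immediate consequence of the identity $Q=\tfrac12(\varphi(M)+I)$, the block form (\ref{ratkaisuun}), the truncation estimate (\ref{fiinkatkaisu}), and the choice (\ref{toler}). Your only addition is the explicit remark that passing to the upper right corner does not increase the norm, which the paper leaves implicit.
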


\begin{remark}  We may assume without loss of generality that $p$ has simple {\it rational } roots, as conditions such as (\ref{oletaemmalta}) allow small perturbations if needed. This means  that the Taylor coefficients $\alpha_{j,n}$ are rational as well.
\end{remark}
\begin{remark}
Observe that  we have an explicit formula for $p(M)^k$. In fact
$$
p(M)= \begin{pmatrix} p(A)&q(A,B)(C)\\
& p(B)\end{pmatrix}=: \begin{pmatrix} R&T\\
& S\end{pmatrix}
$$ and so
$$
p(M)^k =  \begin{pmatrix} R^k&q_{k-1}(R,S)(T)\\
& S^k\end{pmatrix}
$$
where $q_{k-1}(\lambda,\mu)= (\lambda^k - \mu^k)/(\lambda-\mu)$.
\end{remark}

\begin{example}  We shall  again demonstrate  the  approach using the special case as in Example 2.3.  Let $A$ and $B$ be nonsigular bounded operators in a Hilbert space, such that $A$ and $iB$ are self adjoint, normalized e.g. so that both have norms bounded by 1.  In particular then $A^2$  and $-B^2$ are both positive definite  with spectra in some interval $[\alpha,1]$, with $\alpha>0$.   We can proceed in two slightly different ways.

We could start by setting $\zeta= \lambda^2$ and  solve
\begin{equation}
A^2 X - X B^2= AC+CB
\end{equation}  
using {\it sign}-function expansion   in the polynomial $p(\zeta)=\zeta^2-1$. 
Or, you  could solve 
\begin{equation}
p(A) X-Xp(B)= q(A,B)(C)
\end{equation}
with $p(\lambda)= \lambda^4-1$ so that $q(\lambda,\mu)= \lambda^3+ \lambda^2 \mu + \lambda \mu^2 + \mu^3$.  Here you should define $\varphi =1$ in the  open sectors where  $\arg (\lambda^4 ) >0$ and $\varphi=-1$ where  $\arg(\lambda^4) <0$.   Both approaches lead to an expansion in terms of powers of $M^4-I$ which is easy to derive directly.  
Consider the  {\it sign}-function, defined for $Re \  \zeta \not=0$
as 
$$
\rm{sgn}(\zeta)= \frac{\zeta}{(\zeta^2 )^{1/2} }
$$
where $Re(\zeta^2 )^{1/2} >0$. With $w=\zeta^2-1$  and assuming that   $|w|= |\zeta^2-1|<1$  we may expand $(1+w)^{-1/2}$ to get
\begin{equation}
 {\rm sgn}(\zeta)= \zeta ( 1-\frac{1}{2} w + \frac{3}{8} w^2 - \frac{5}{16} w^3 + \cdots).
\end{equation}
Now,  we can apply this to the operator $M^2$. In fact, we have
$$
{\rm sgn}(M^2)= M^{2} ( I -\frac{1}{2} (M^4-1) + \frac{3}{8} (M^4-1)^2 - \frac{5}{16} (M^4-1)^3 + \cdots)
$$
which converges   as  the spectral radius  $\rho(M^4-I) = \|(A^2\oplus B^2)^2 -I\| <1$.   The solution to the original equation is  then the right upper corner element of $\frac{1}{2} {\rm sgn}(M^2)$.

\end{example}

%HELPPO LOYTAA

%VIHOVIIMEiNEN LUKU

\bigskip

\bigskip

 {\bf References}
 \smallskip
   
 [1] R. Bhatia, P. Rosenthal, How and why to solve the operator equation AX - XB = Y,  Bull. London Math. Soc., 29  (1997)1 - 21
   
  %[adddavies] E.Brian Davies, Linear Operators  and Their Spectra, Cambridge Univ. Press, (2007)
  
[2] J. W. Demmel: The Condition Number of Equivalence Transformations that Block Diagonalize Matrix Pencils, SIAM J. Numer. Anal. 20, No. 3, (1983)
 
 [3]  P.R.Halmos, Capacity in Banach Algebras, IndianaUniv. Math., 20, (1971), pp 855-863, 
 
[4] Anders C. Hansen, Olavi Nevanlinna, Complexity issues in computing
 spectra, pseudospectra and resolvents, \'Etudes Op\'eratorielles,  Banach Center Publ. Volume 112, (2017), pp 171 - 194,  
  DOI: 10.4064/bc112-0-10
  
[5] E. Heinz, Beitr\"age zur St\"orungstheorie der Spektralzerlegung, Math. Ann. 123 (1951) 415 - 438.

[6]  N. J. Higham. Functions of matrices. Society for Industrial and Applied Mathematics
(SIAM), Philadelphia, PA, (2008)

 %[Ka-Kr]   M.Karow, D.Kressner,  On A Perturbation Bound For Invariant Subspaces Of Matrices; SIAM Journal On Matrix Analysis And Applications. (2014),  DOI : 10.1137/130912372
 
[7]   Daniel Kressner, Bivariate Matrix Functions, Operators and Matrices, 8,2,  pp 449-466, (2014), DOI : 10.7153/oam-08-23
 
 [8]  Olavi Nevanlinna, Convergence of Iterations for Linear Equations, Birk\-h\"auser, (1993)
 
[9] Olavi Nevanlinna, Computing the spectrum and representing the resolvent, Numer. Funct. Anal. Optim. 30 no.9 - 10 (2009), 1025 - 1047.

 [10] O. Nevanlinna, Multicentric Holomorphic Calculus, Computational Methods and Function Theory, June 2012, Vol. 12, Issue 1, 45 - 65.

 [11]  O. Nevanlinna, Lemniscates and K -spectral sets, J. Funct. Anal. 262, (2012), 1728 - 1741.
 
 [12] O. Nevanlinna, Polynomial as a new variable - a Banach algebra with functional calculus, Oper. and Matrices 10 (3) (2016)  567 - 592
 
[13] T. Ransford, Potential Theory in the Complex Plane, London Math. Soc. Student Texts 28, Cambridge Univ. Press, (1995)

 [14]  M. Rosenblum, On the operator equation BX-XA=Q, Duke Math.J. 23 (1956) 263 - 270
 
 [15]    J.M.Varah, On the Separation of Two Matrices,  SIAM J. Numer. Anal., Vol. 16(2)  pp. 216 - 222, (1979)

\end{document}